\newcommand{\norma}[1]{\| #1 \|}
\newcommand{\conj}[2]{\left \{ {#1} \, : \, {#2} \right \}}
\newcommand{\cv}{\rightarrow}
\newcommand{\cvf}{\overset{\omega}{\rightarrow}}
\newcommand {\cvfe} {\overset{\omega^\ast}{\rightarrow}}
\newcommand {\N} {\mathbb{N}}
\newcommand{\sol}[1]{\text{sol}(#1)}
\newcommand{\proof}{\noindent \textit{Proof. }}
\newcommand{\qed}{\hfill \ensuremath{\Box}}
\newtheorem{df}{Definition}[section]
\newtheorem{prop}[df]{Proposition}
\newtheorem{teo}[df]{Theorem}
\newtheorem{corol}[df]{Corollary}
\newtheorem{lemma}[df]{Lemma}
\newtheorem{ex}[df]{Example}
\newtheorem{rema}[df]{Remark}
\newtheorem{remas}[df]{Remarks}
\begin{document}

\title{The property (d) and the almost limited completely continuous operators}
\author{M. L. Louren\c co and V. C. C. Miranda \footnote{Supported by a CAPES PhD's scholarship (88882.377946/2019-01).}}
\date{}

\maketitle

\begin{abstract}
In this paper, we study some geometric  properties in Banach lattices and the class of almost limited completely continuous operators. For example,  we study Banach lattices with property (d) and we give a new characterization of this property in terms of the solid hull of almost limited sets. 
     
\noindent \textbf{Keywords:} Almost limited set, \and Almost limited completely continuous operators, \and Banach Lattices, \and Property (d), \and Strong Gelfand-Phillips property.

\noindent \textbf{Mathematics Subject Classification (2010)} 46B42, \and   47B65
\end{abstract}

\section{Introduction}

Throughout this paper $X$ and $Y$ will denote real Banach spaces, $E$ and $F$ will denote Banach lattices. We denote by $B_X$ the closed unit ball of $X$. In a Banach lattice the additional lattice structure provides a large number of tools that are not available in more general Banach spaces.
This fact facilitates the study of geometric properties of Banach lattices, as well as of
the properties of operators acting between lattices. A  number of questions arise naturally about the
relationship between the properties of the operators and the lattice structure. Among these questions,
one of the most frequent is the so-called majorization problem, which consists of, given positive operators $ S \leq  T$ between Banach lattices $E$ and $F$,  to study which properties $ S$ inherits from the operator $T.$
 These theories have won notoriety in many articles that have been developed over the last 20 years. Our interest is to study some geometric properties of Banach spaces in the context of Banach lattices and their consequences in operator theory.  To state our results, we need to fix some notation and recall some definitions.
We denote by $E^+$ the set of all positive elements in $E$. For a set $A \subset E$, $A^+ = A \cap E^+$.
Recall that $A \subset E$ is said to be \textit{solid} if $|y| \leq |x|$ with $x \in A$ implies that $y \in A$. In addition, if $F$ is a solid subspace of $E$, we say that $F$ is an \textit{ideal}.
The \textit{solid hull of $A$}, $\sol{A}$, (resp., the \textit{ideal generated by $A$}, $E_A$ ,) is the smallest solid set that contains $A$ (resp., the smallest ideal that contains $A$). A \textit{band} $B$ in $E$ is an ideal such that $\sup(A) \in B$ for every subset $A \subset B$ which has supremum in $E$.
A norm $\Vert \,\,\, \Vert $ of a Banach lattice $E$ is called  \textit{order continuous} if    for each net $(x_{\alpha})$ such that $x_\alpha \downarrow 0$ in $E$ imply that $\Vert x_\alpha\Vert  \to 0,$  where the notation  $x_\alpha \downarrow 0$   means that the net $(x_\alpha)$ is decreasing  and  $\inf \{x_\alpha : \alpha \} = 0.$ 
There are some  equivalent statements of this property which can be found in \cite[Section 2.4]{meyer}.

A Banach lattice $E$ has  \textit{property (d)} if $|f_n| \cvfe 0$ for every disjoint weak* null sequence $(f_n) \subset E'$. This property was studied by Wnuk in \cite{wnuk}, however, this terminology was introduced later in \cite{elbour}.
In \cite{chen}, Chen, Chen and Ji show that  if $E$ is a $\sigma$-Dedekind complete Banach lattice, then $E$ has property (d). In \cite{wnuk} Wnuk showed that $\ell_\infty/c_0$ has property (d) but it is not $\sigma$-Dedekind complete. It is also clear that if the lattice operations in $E'$ are weak* sequentially continuous, $E$ has property (d). The reciprocal does not hold since $\ell_\infty$ has property (d), but the lattice operations of $\ell_\infty'$ are not weak* sequentially continuous. Indeed, by Josefson-Nussenzweig theorem there exists weak* null sequence $(f_n) \subset \ell_\infty'$ with $\norma{f_n} = 1$ for all $n$, however $(|f_n|)$ is not weak* null, because for $e = (1,1,1, \dots)$, we have that $|f_n|(e) = \sup_{x \in [-e,e]} |f_n(x)| = \norma{f_n} = 1$ for all $n$.

 We recall a few geometric definitions in Banach spaces and its lattice versions.
A subset $A \subset X$ is said to be \textit{limited} if every weak* null sequence in $X'$ converges uniformly to zero on $A$. We say that $X$ has the \textit{Dunford-Pettis* property} (DP*) if every relatively weakly compact set is limited.
If every limited set of $X$ is relatively compact it is said that $X$ has the \textit{Gelfand-Phillips property} (GP), or equivalently every limited weakly null sequence is norm null. Separable, reflexive and Schur Banach spaces have the GP property. The study and consequences of limited sets, DP* property and GP property in Banach spaces can be found, for example in  \cite{bourgdiest, cargalou, drew}. We observe that
a $\sigma$-Dedekind complete Banach lattice has the GP property if and only if it has order-continuous norm (see Theorem 4.5 in \cite{wnukbook}).

 Following the idea and technique of Bouras \cite{bouras} who considered the disjoint versions of the geometric properties of Banach spaces for lattices, J. X. Chen, Z. L Chen and G. X. Ji introduced the concept of almost limited sets in a Banach lattice \cite{chen}. A bounded set $A \subset E$ is an \textit{almost limited set} if every disjoint weak* null sequence in $E'$ converges uniformly to zero on $A$.
 We say that $E$ has the   \textit{weak Dunford-Pettis* property} (wDP*) if every relatively weakly compact set is almost limited and $E$ has    the \textit{strong Gelfand-Phillips property} (sGP) if every almost limited set is relatively compact.  It is known that  $c_0$ and $c$ are Banach lattices with the sGP (see pages 17 in \cite{ardakani}) without the wDP* property (see Remark 3.4 in \cite{chen} and Example 2.10 in \cite{ardakani}, resp.) and the Banach lattices $\ell_\infty$ and $L_1[0,1]$ have the wDP* (see Remark 3.4 in \cite{chen}) and do not have the sGP property (see page 19 in \cite{ardakani} for the fact that $L_1[0,1]$ does not enjoy the sGP property).

The aim of this paper is to study property (d). In particular, we improve a result given in \cite{micha}, about the solid  hull of an almost  limited set, and we also improve Theorem 3.10 in \cite{ardakani} concerning operators that maps almost limited sets onto
almost limited sets. As a consequence we give a new characterization of this property in terms of the solid hull of almost limited sets. We  also apply these results to establish some properties of the almost limited completely continuous operators  between Banach lattices,  including a majority problem concerning this class of operators.

We refer the reader  to \cite{alip, meyer} for Banach lattices theory and positive operators.

\section{Banach lattices with property (d)}

The property (d) and the almost limited sets are deeply connected. 
In \cite{micha}, the authors proved that for $\sigma$-Dedekind complete Banach lattices, the solid hull of an almost limited set is also  an almost limited set. In the next Lemma, we improve such result
by showing that it holds for Banach lattices with property (d).

\begin{lemma} \label{lemadom1}
Let $E$ be a Banach lattice with property (d). If  $A \subset E$ is almost limited, then $\sol{A}$ also is almost limited.
\end{lemma}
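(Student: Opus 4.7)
The plan is an argument by contradiction. Suppose $\sol{A}$ were not almost limited: there would exist $\epsilon>0$, a disjoint weak$^*$ null sequence $(f_n)\subset E'$, and $y_n\in\sol{A}$ with $|f_n(y_n)|>\epsilon$. Picking $x_n\in A$ with $|y_n|\leq|x_n|$, the inequality $|f_n(y_n)|\leq |f_n|(|y_n|)\leq |f_n|(|x_n|)$ gives $|f_n|(|x_n|)>\epsilon$. Property (d) enters at this point: $(|f_n|)$ is a disjoint, positive, weak$^*$ null sequence in $E'$, and in particular $|f_n|(|z|)\to 0$ for every $z\in E$ (apply weak$^*$ nullity to $z^+$ and $z^-$ separately).

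The crux is to manufacture a fresh disjoint weak$^*$ null sequence $(g_n)\subset E'$ whose evaluation at $x_n$ recovers the full positive quantity $|f_n|(|x_n|)$, so that the almost limitedness of $A$ will be directly violated. I work locally on the direct sum of ideals $F_n=E_{x_n^+}\oplus E_{x_n^-}$ (this is a genuine direct sum since $x_n^+\wedge x_n^-=0$ forces $E_{x_n^+}\cap E_{x_n^-}=\{0\}$) and set $g_n^0(u+v)=|f_n|(u)-|f_n|(v)$ for $u\in E_{x_n^+}$, $v\in E_{x_n^-}$. Using $|u+v|=|u|+|v|$ for such disjoint $u,v$, one checks $|g_n^0(u+v)|\leq |f_n|(|u|)+|f_n|(|v|)=|f_n|(|u+v|)$. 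Since $z\mapsto |f_n|(|z|)$ is a sublinear functional on $E$, a dominated Hahn-Banach extension produces $g_n\in E'$ with $g_n|_{F_n}=g_n^0$ and $|g_n(z)|\leq |f_n|(|z|)$ for every $z\in E$. In particular $g_n(x_n)=|f_n|(x_n^+)+|f_n|(x_n^-)=|f_n|(|x_n|)>\epsilon$.

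It then remains to verify that $(g_n)$ is admissible. The bound $|g_n(z)|\leq |f_n|(|z|)$ combined with Riesz-Kantorovich yields $|g_n|\leq |f_n|$ in $E'$, so $|g_n|\wedge |g_m|\leq |f_n|\wedge |f_m|=0$ for $n\neq m$ and $(g_n)$ is disjoint; the same bound together with $|f_n|(|z|)\to 0$ shows that $(g_n)$ is weak$^*$ null. Almost limitedness of $A$ then forces $|g_n(x_n)|\to 0$, contradicting $|g_n(x_n)|>\epsilon$.

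The main obstacle is precisely this construction of $(g_n)$. A naive attempt --- testing $A$ against $(|f_n|)$ itself --- only yields $\sup_{x\in A}||f_n|(x)|\to 0$, which is strictly weaker than what is needed: the expression $|f_n|(x)=|f_n|(x^+)-|f_n|(x^-)$ can enjoy arbitrary cancellation, whereas $|f_n|(|x|)=|f_n|(x^+)+|f_n|(x^-)$ cannot. The local decomposition through disjoint ideals followed by a dominated Hahn-Banach extension is what allows one to absorb the sign information of $x_n$ into a new functional $g_n$ while keeping $|g_n|\leq |f_n|$, and thus preserving both disjointness and (via property (d)) weak$^*$ nullity of the test sequence.
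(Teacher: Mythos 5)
Your argument is correct and follows essentially the same route as the paper's proof: pass from the offending points of $\sol{A}$ to dominating points of $A$, use property (d) to get $|f_n|\cvfe 0$, and produce a disjoint weak$^*$ null sequence $(g_n)$ with $|g_n|\le |f_n|$ and $g_n(x_n)=|f_n|(|x_n|)$, which kills the almost limitedness of $A$. The only difference is that where you construct $g_n$ by hand (the dominated Hahn--Banach extension off $E_{x_n^+}\oplus E_{x_n^-}$), the paper simply invokes Theorem 1.23 of Aliprantis--Burkinshaw, of which your construction is in effect the standard proof.
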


\proof
Assume that $\sol{A}$ is not almost limited. We are going to prove that $A$ cannot be almost limited. Since $\sol{A}$ is not almost limited,
there exist a disjoint weak* null sequence $(f_n) \subset E'$ and a sequence $(x_n) \subset \sol{A}$ such that $|f_n(x_n)| \geq \epsilon$ for all $n$. In particular, for each $n$, there exists $y_n \in A$ such that $|x_n| \leq |y_n|$. On the  other hand, it follows from Theorem 1.23 in \cite{alip} that for each $n$, there exists $g_n \in E'$ satisfying $|g_n| \leq |f_n|$ and $|f_n|(|y_n|) = g_n(y_n)$. Now, as  $(f_n)$ is a disjoint weak* null sequence in $E'$ and $E$ has property (d), $|f_n| \cvfe 0$. This implies that $g_n \cvfe 0$. 
However,
$ \epsilon \leq |f_n(x_n)| \leq |f_n|(|x_n|) \leq |f_n|(|y_n|) = g_n(y_n).$ So,
 $A$ cannot be almost limited, against the assumption. \qed
 
 \medskip

Lemma \ref{lemadom1} is not true for general  Banach lattices. In fact, the singleton $\{e\}$ in $c$, where $c$ denotes the Banach lattice of all real convergent sequences, is almost limited. However, its solid hull $\sol { \{e\} } = [-e,e] = B_c$ is not almost limited.  It is known that every order interval in a Banach lattice $E$ is almost limited if and only if $E$ has property (d) (see Proposition 2.3 in \cite{elbour2}). Using this fact and Lemma \ref{lemadom1}, we have the following result.

\begin{prop} \label{prop22}
For a Banach lattice $E$, the following assertions are equivalent:
\begin{enumerate}
    \item $E$ has property (d).
    \item Every order interval is almost limited.
    \item The solid hull of every almost limited set also is almost limited.
\end{enumerate}
\end{prop}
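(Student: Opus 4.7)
The plan is to close the loop (1)$\Rightarrow$(3)$\Rightarrow$(2)$\Rightarrow$(1). The equivalence (1)$\Leftrightarrow$(2) is Proposition~2.3 of \cite{elbour2}, quoted in the paragraph preceding the statement, so (2)$\Rightarrow$(1) comes for free; meanwhile (1)$\Rightarrow$(3) is exactly Lemma~\ref{lemadom1}. The only remaining implication is (3)$\Rightarrow$(2).

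For (3)$\Rightarrow$(2), my approach is to show that singletons are almost limited and then invoke hypothesis~(3). Given $x\in E^+$, any (in particular, disjoint) weak* null sequence $(f_n)\subset E'$ satisfies $|f_n(x)|\to 0$ by pointwise weak* convergence, so $\{x\}$ is almost limited. Hypothesis~(3) then yields that $\sol{\{x\}}=[-x,x]$ is almost limited. A general order interval $[a,b]$ is reduced to this case by translation: write $[a,b]=a+[0,b-a]\subseteq a+[-(b-a),b-a]$, and observe that for a disjoint weak* null $(f_n)\subset E'$,
\[
\sup_{z\in[a,b]}|f_n(z)|\ \leq\ |f_n(a)|\,+\,\sup_{y\in[-(b-a),b-a]}|f_n(y)|\ \longrightarrow\ 0,
\]
so $[a,b]$ is almost limited.

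The main obstacle is essentially inexistent, since the genuine content already sits in Lemma~\ref{lemadom1} together with the cited characterization. The only subtlety worth flagging is noticing that disjoint weak* null sequences carry no information when tested against a single point, which is exactly what lets the singleton step feed hypothesis~(3) into a statement about order intervals.
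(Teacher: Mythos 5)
Your proof is correct and follows essentially the same route the paper intends: the paper gives no explicit proof, relying on Proposition~2.3 of \cite{elbour2} for $(1)\Leftrightarrow(2)$ and Lemma~\ref{lemadom1} for $(1)\Rightarrow(3)$, leaving only the easy implication $(3)\Rightarrow(2)$ implicit. Your singleton-plus-translation argument supplies exactly that missing step, and it is sound (note $\sol{\{x\}}=[-x,x]$ for $x\in E^+$, and subsets of almost limited sets are almost limited).
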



In \cite{ardakani}, the authors gave  an example where bounded linear operators do not map almost limited sets onto almost limited sets.
The next result improves Theorem 3.10 of \cite{ardakani}, where   $F$ is $\sigma$-Dedekind complete.

\begin{teo} \label{lemadom2}
Let $E$ and $F$ be Banach lattices. Let $T: E \to F$ be an order bounded operator   and let $A \subset E$ be an almost limited solid set. If $F$ has property (d), then $T(A)$ is an almost limited subset of $F$.
\end{teo}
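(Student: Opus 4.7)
The plan is to argue by contradiction, adapting the template of the proof of Lemma \ref{lemadom1}. Suppose that $T(A)$ is not almost limited. Then there exist a disjoint weak$^*$ null sequence $(g_n) \subset F'$, points $x_n \in A$, and $\varepsilon > 0$ satisfying $|g_n(Tx_n)| \geq \varepsilon$ for every $n$. Since $F$ has property (d), $|g_n| \cvfe 0$ in $F'$.

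For each $n$, invoke Theorem 1.23 of \cite{alip}, exactly as in the proof of Lemma \ref{lemadom1}, to pick $h_n \in F'$ with $|h_n| \leq |g_n|$ and $h_n(Tx_n) = |g_n|(|Tx_n|)$. Since $|g_n(Tx_n)| \leq |g_n|(|Tx_n|)$, this gives $h_n(Tx_n) \geq \varepsilon$. The sequence $(h_n)$ is disjoint, since $|h_n| \leq |g_n|$ forces $|h_n| \wedge |h_m| \leq |g_n| \wedge |g_m| = 0$ for $n \neq m$, and it is weak$^*$ null, because the bound $|h_n(y)| \leq |h_n|(|y|) \leq |g_n|(|y|)$ tends to $0$ for every $y \in F$ by property (d).

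Next comes the role of the hypotheses on $T$ and $A$. Since $A$ is solid, $|x_n| \in A^+$, and the order boundedness of $T$ furnishes $z_n \in F^+$ with $T([-|x_n|,|x_n|]) \subset [-z_n, z_n]$, so $|Tx_n| \leq z_n$. This order bound connects the $F'$-side estimates on $Tx_n$ to the $E'$-side that we need for the contradiction.

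The crux is to extract from $(h_n)$ a disjoint weak$^*$ null sequence in $E'$ witnessing the failure of almost limitedness of $A$. The natural candidate is $f_n = T'h_n \in E'$, which satisfies $f_n(x_n) = h_n(Tx_n) \geq \varepsilon$ with $x_n \in A$, and is weak$^*$ null by the weak$^*$-to-weak$^*$ continuity of $T'$. The main obstacle is to verify that $(f_n)$ is disjoint in $E'$: this is where the order boundedness of $T$ and the solidity of $A$ must combine essentially, so that the disjointness of $(h_n)$ in $F'$ can be transported to disjointness of $(T'h_n)$ in the Dedekind complete lattice $E'$ (via a band-projection argument using that the order bounds $z_n$ localize each $|h_n|$ appropriately on the image $T(E)$). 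Once disjointness of $(f_n)$ is secured, the inequality $|f_n(x_n)| \geq \varepsilon$ with $x_n \in A$ contradicts the almost limitedness of $A$, completing the proof.
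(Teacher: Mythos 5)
There is a genuine gap at precisely the step you yourself flag as ``the crux'': the disjointness of $(f_n)=(T'h_n)$ in $E'$. The adjoint of an order bounded operator does not carry disjoint functionals to disjoint functionals; that would require $T'$ to be a Riesz homomorphism, equivalently $T$ to be interval preserving, which is far stronger than order boundedness. A minimal counterexample: $T:\R\to\R^2$, $Tx=(x,x)$, is positive (hence order bounded), the coordinate functionals $h_1=(1,0)$ and $h_2=(0,1)$ are disjoint in $(\R^2)'$, yet $T'h_1=T'h_2=1$ in $\R'\cong\R$, so $(T'h_n)$ is not disjoint. The sketched ``band-projection argument using that the order bounds $z_n$ localize each $|h_n|$ on $T(E)$'' does not close this: the bound $z_n\ge |Tx_n|$ constrains $|h_n|$ only at the single vector $Tx_n$ and says nothing about how $|h_n|$ and $|h_m|$ interact on the rest of $F$, and solidity of $A$ cannot help because the obstruction lives entirely on the $F'\to E'$ side, independently of $A$.

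The paper obtains disjointness in $E'$ from a different source. It first invokes Theorem 2.7 of \cite{elbour2} (available because $F$ has property (d)) to reduce to testing $f_n(Tx_n)\to 0$ for \emph{disjoint} sequences $(x_n)\subset A^+$ and disjoint positive weak* null $(f_n)\subset F'$; then the disjointification result on p.~77 of \cite{alip} produces a disjoint sequence $(g_n)\subset E'$ with $|g_n|\le |T'f_n|$ and $g_n(x_n)=T'f_n(x_n)$, where the disjointness of $(g_n)$ comes from the disjointness of the \emph{vectors} $(x_n)$ in $E$, not from any disjointness-preservation of $T'$. Order boundedness of $T$ is then used only to show $|T'f_n|(x)\to 0$ for $x\in E^+$, hence $g_n\cvfe 0$, so that almost limitedness of $A$ applies to give $g_n(x_n)\to 0$. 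To salvage your contradiction scheme you would have to first pass to a disjoint sequence in $A^+$ (this is where solidity of $A$ genuinely enters) and then disjointify against that sequence --- which is essentially the paper's argument.
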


\proof
By Theorem 2.7 of \cite{elbour2}, it suffices to prove that $f_n(Tx_n) \to 0$ for each disjoint sequence $(x_n) \subset A^+$  and each disjoint sequence $f_n \cvfe 0$ in $(F')^+$. Considering the sequences $(x_n) \subset E$ and $(T'f_n) \subset E'$, there exists a disjoint sequence $(g_n) \subset E'$ satisfying $|g_n| \leq |T'f_n|$ and $g_n(x_n) = T'f_n(x_n)$ for all $n$ (see page 77 in \cite{alip}). We claim  that $g_n \cvfe 0$. Indeed, if $x \in E^+$, since $T$ is order bounded, there exists $y \in F^+$ with $T[-x,x] \leq [-y,y]$. Hence
\begin{align*}
    |T'f_n|(x) & = \sup \conj{|T'f_n (u)|}{|u| \leq x} \leq \sup \conj{|f_n(v)|}{|v| \leq y}  = |f_n|(y) = f_n(y) \to 0.
\end{align*}
Now, since $x \in E^+$, it follows that 
$ |g_n(x)| \leq |g_n|(|x|) = |T'f_n|(|x|) \to 0,  $ which yields the claim.
As $A \subset E$ is almost limited, $(x_n) \subset A$ and $(g_n) \subset E'$ is a disjoint weak* null sequence, $g_n(x_n) \to 0$. Hence $T'f_n(x_n) \to 0.$
\qed

\medskip

Now we give   more examples of Banach lattices satisfying property (d). 
Let $(E_n)$ be a family of Riesz spaces, the direct  sum $\bigoplus_{n \in \N} E_n$ is a Riesz space with the following partial order relation 
$ (x_n) \leq (y_n) \iff x_n \leq y_n$ for all $n \in \N. $
Further, $(x_n) \vee (y_n) = (x_n \vee y_n)$ and $(x_n) \wedge (y_n) = (x_n \wedge y_n).$
If, in addition, each $E_n$ is a Banach lattice, we can consider the following Banach lattices:
$$ (\bigoplus_{n \in \N} E_n)_0 = \conj{(x_n)_n}{x_n \in E_n, \, \lim_n \norma{x_n} = 0} $$
and
$$ (\bigoplus_{n \in \N} E_n)_p = \conj{(x_n)_n}{x_n \in E_n, \, \sum_n \norma{x_n}^p < \infty}, $$
where $1 \leq p < \infty.$ In $(\bigoplus_{n \in \N} E_n)_0$
we consider the supremum norm, while in $(\bigoplus_{n \in \N} E_n)_p$ the p-norm. Moreover, $(\bigoplus_{n \in \N} E_n)_0' = (\bigoplus_{n \in \N} E_n)_1$ and $(\bigoplus_{n \in \N} E_n)_p' = (\bigoplus_{n \in \N} E_n)_q$ where $1/p + 1/q = 1$ (see Theorem 4.6 in \cite{alip}).
It is natural to ask under which conditions the above Banach lattices have property (d).

\begin{prop} \label{expropd2}
Let $(E_n)$ be a family of Banach lattices. Then  $E = (\bigoplus_{n \in \N} E_n)_0$ has property (d) if, and only if   $E_n$  has property (d) for all $n.$
\end{prop}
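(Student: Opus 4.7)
My strategy is to work coordinatewise, using that the dual identification $E' = (\bigoplus_{n \in \N} E_n')_1$ is a lattice isomorphism: lattice operations in $E'$ are performed componentwise. In particular, for any positive $x = (x_k) \in E^+$ and any $F = (f_k) \in E'$, one has $|F|(x) = \sum_k |f_k|(x_k)$; and a sequence $(F_n)$ is disjoint in $E'$ if and only if $(f_n^k)_n$ is disjoint in $E_k'$ for each fixed $k$. This coordinatewise description of the lattice structure of $E'$ is the main technical input; once it is in hand, both implications reduce to applying property (d) factor by factor, with only an elementary $\ell^1$ tail estimate needed on the $\Leftarrow$ side.

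For the ``only if'' direction, I would fix $k \in \N$ and a disjoint weak* null sequence $(f_m) \subset E_k'$. Define $F_m \in E'$ to have $f_m$ in the $k$-th coordinate and zeros elsewhere. The sequence $(F_m)$ is disjoint in $E'$, and for any $x = (x_j) \in E$, $F_m(x) = f_m(x_k) \to 0$, so $F_m \cvfe 0$ in $E'$. Property (d) of $E$ then yields $|F_m| \cvfe 0$; since $|F_m|$ has $|f_m|$ in the $k$-th coordinate and zeros elsewhere, testing against the embedded copy of $E_k$ in $E$ recovers $|f_m| \cvfe 0$ in $E_k'$.

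For the ``if'' direction, let $(F_n) \subset E'$ be disjoint and weak* null, and write $F_n = (f_n^k)_k$. Coordinatewise disjointness gives that $(f_n^k)_n$ is disjoint in $E_k'$ for each $k$, and testing $F_n$ on embedded elements of $E_k$ shows it is weak* null in $E_k'$. Property (d) in each $E_k$ then yields $|f_n^k| \cvfe 0$ in $E_k'$ for every $k$. To conclude, fix $x = (x_k) \in E^+$ and $\epsilon > 0$. Since weak*-convergent sequences are bounded, $M := \sup_n \sum_k \norma{f_n^k} = \sup_n \norma{F_n} < \infty$, and since $x \in E$ one has $\norma{x_k} \to 0$. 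Pick $K$ with $\norma{x_k} < \epsilon/(2M)$ for all $k > K$; this yields $\sum_{k > K} |f_n^k|(x_k) \leq \epsilon/2$ uniformly in $n$, while the finite head $\sum_{k=1}^K |f_n^k|(x_k)$ tends to $0$ as $n \to \infty$ by property (d) applied in each of the finitely many factors. Therefore $|F_n|(x) = \sum_k |f_n^k|(x_k) \to 0$, and $|F_n| \cvfe 0$ follows.
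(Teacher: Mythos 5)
Your proposal is correct and follows essentially the same route as the paper's proof: the same coordinatewise identification of the lattice structure of $E' = (\bigoplus_n E_n')_1$, the same embedding of a single factor's disjoint weak* null sequence for the ``only if'' direction (you argue it directly where the paper argues by contraposition, but the construction is identical), and the same $M$--tail plus finite-head estimate for the ``if'' direction. No gaps.
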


\proof Suppose that all $E_n$ have property (d). Let $(f_k) \subset E' = (\bigoplus_{n \in \N} E_n')_1$ be a disjoint weak* null sequence. In particular, if $f_k = (f_k ^n)_n$ with $f_k ^n \in E_n'$ for all $k \in \N$ and $n \in \N$, we have that $(f_k ^n)_k$ is disjoint in $E_n'$, because if $j \neq k$,
$ 0 = f_k \wedge f_j = (f_k ^n \wedge f_j ^n)_n, $ and so  $f_k ^n \wedge f_j ^n = 0 $ for all $n \in \N$ and for all $k \neq j$. In addition, since the linear embedding of $E_n$ into $E$ is continuous, 
we have that  $f_k ^n \cvfe 0,$   when $k \to \infty,$ in $E_n'$. This implies that  $|f_k ^n| \cvfe 0$ as $k \to \infty$ for all $n$.

Let $x = (x_n) \in E$. We claim that $|f_k|(x) \to 0.$
First since $f_k \cvfe 0$ in $E'$, there exists $M>0$ such that
$$ \norma{|f_k ^n|} = \norma{f_k ^n} \leq \sum_n \norma{f_k ^n} = \norma{|f_k|}_1 \leq M \quad \text{for all $n,k \in \N$.} $$
Finally, let $\epsilon > 0$. As $\norma{x_n} \to 0$, there exists $n_0 \in \N$ such that
$ \norma{x_n} < \epsilon/(2M) \quad \text{for all } n > n_0. $
On the  other hand, as $|f_k ^n|(x) \to 0$ when $k \to \infty$ for all $n \in \N,$  for each $n \in \N$, there exists $k_n \in \N$ such that
$$ ||f_k ^n|(x)| < \epsilon/(2n_0) \quad \text{for all }k \geq k_n. $$
Let $k_0 = \max\{k_1, \dots, k_{n_0}\}$. If $k \geq k_0$,
\begin{align*}
    ||f_k|(x)|    & \leq \sum_{n=1}^{n_0} ||f_k ^n|(x_n)| + \sum_{n=n_0+1}^\infty ||f_k ^n|(x_n)|     \leq \sum_{n=1}^{n_0} ||f_k ^n|(x_n)| + \sum_{n=n_0+1}^\infty \norma{f_k ^n} \norma{x_n} \\
                & < \sum_{n=1}^{n_0} \epsilon/(2n_0) + \norma{|f_k|}_1 \epsilon/(2M) <\epsilon.
\end{align*}
This means that $|f_k| \cvfe 0$ as wanted.

Now, we assume that $E_n$ does not have property (d) for some $n$.
Then, there exists a disjoint weak* null sequence $(f_n ^k)_k \subset E_n'$ such that 
 $(|f_n ^k|)_k$ is not weak* null. Without loss of generality, assume that there exist $\epsilon > 0$ and an element $x_n \geq 0$ in $E_n$ such that $|f_n ^k|(x_n) \geq \epsilon$ for all $k$. Consider $f_k = (0, \dots, 0, f_n ^k, 0, \dots) \in E'$ for all $k \in \N$. We have that
$$ |f_k| \wedge |f_j| = (0, \dots, 0, |f_n ^k| \wedge |f_j ^k|, 0, \dots) = 0, $$
which implies that $(f_k)$ is disjoint in $E'$. Further, if $x = (x_i) \in E$,
$f_k(x) = f_n ^k(x_n) \to 0$ as $k \to \infty$. Hence $(f_k) \subset E'$ is weak* null. However, 
$$|f_k|(0, \dots, 0, x_n, 0, \dots) = |f_n ^k|(x_n) \geq \epsilon$$
for all $k \in \N$. Thus $E$ lacks property (d). \qed

\medskip
 
Using the same arguments of Proposition \ref{expropd2}  and doing the necessary adaptations we can prove the next result.

\begin{prop} \label{expropd3}
Let $(E_n)$ be a family of Banach lattices and  let $1 \leq p < \infty$. Then  $E = (\bigoplus_{n \in \N} E_n)_p$  has property (d) if, and only if   $E_n$  has property (d) for all $n.$
\end{prop}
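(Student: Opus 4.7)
The plan is to follow the structure of Proposition \ref{expropd2} line by line, replacing the supremum norm / $\ell_1$-duality by the $\ell_p$/$\ell_q$-duality, where $1/p + 1/q = 1$. So $E' = (\bigoplus_n E_n')_q$ and the proof splits into forward and reverse implications.

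For the forward direction, I would start with a disjoint weak* null sequence $(f_k) \subset E'$, write $f_k = (f_k^n)_n$ with $f_k^n \in E_n'$, and observe by exactly the same coordinatewise argument as before that $(f_k^n)_k$ is disjoint in $E_n'$ and weak* null (via continuity of the natural embedding $E_n \hookrightarrow E$). Property (d) of each $E_n$ then gives $|f_k^n| \cvfe 0$ as $k \to \infty$. Next I would fix $x = (x_n) \in E$ and bound $|f_k|(x) = \sum_n |f_k^n|(x_n)$ using the uniform bound $\norma{f_k}_q \leq M$ together with $\sum_n \norma{x_n}^p < \infty$.

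The key adaptation is the tail estimate: instead of $\norma{x_n} \to 0$ allowing a crude tail bound, I would apply H\"older's inequality. Given $\epsilon > 0$, choose $n_0$ with $\left(\sum_{n>n_0} \norma{x_n}^p\right)^{1/p} < \epsilon/(2M)$, so that for every $k$,
\[
\sum_{n>n_0} |f_k^n|(x_n) \leq \left(\sum_{n>n_0} \norma{f_k^n}^q\right)^{1/q} \left(\sum_{n>n_0} \norma{x_n}^p\right)^{1/p} \leq M \cdot \frac{\epsilon}{2M} = \frac{\epsilon}{2}.
\]
For the finite head $\sum_{n=1}^{n_0} |f_k^n|(x_n)$, using that $|f_k^n|(x_n) \to 0$ for each fixed $n \leq n_0$, pick $k_0$ large enough so this head is below $\epsilon/2$ for $k \geq k_0$. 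Combining these yields $|f_k|(x) < \epsilon$, hence $|f_k| \cvfe 0$.

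For the converse, the construction from Proposition \ref{expropd2} transfers verbatim: if some $E_n$ lacks property (d), witness this by a disjoint weak* null sequence $(f_n^k)_k \subset E_n'$ with $|f_n^k|(x_n) \geq \epsilon$ for some $x_n \in E_n^+$, and then embed $f_k = (0,\ldots,0,f_n^k,0,\ldots) \in E'$; disjointness, weak* nullity in $E'$, and the failure of $|f_k| \cvfe 0$ (tested on the embedded $x_n$) all go through unchanged, since the embedding $E_n' \hookrightarrow E' = (\bigoplus_n E_n')_q$ is isometric for every $q \in [1,\infty]$. I expect the main (only) obstacle is the H\"older step in the tail estimate; everything else is a formal copy of the previous proof.
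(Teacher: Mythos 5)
Your proposal is correct and is exactly what the paper intends: the paper's "proof" of this proposition is literally the instruction to repeat the argument of Proposition \ref{expropd2} with the necessary adaptations, and the H\"older tail estimate (interpreted as a supremum bound when $p=1$, $q=\infty$) is precisely the one adaptation needed. Everything else, as you note, transfers verbatim.
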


Let $E_n = E$ for each $n.$ Then $(\bigoplus_{n \in \N} E_n)_0$ and $(\bigoplus_{n \in \N} E_n)_p$ are denoted by $c_0(E)$ and $\ell_p(E)$, respectively, for all $1 \leq p < \infty$. As a consequence of  Proposition \ref{expropd2} and Proposition \ref{expropd3}, we get the following result.

\begin{corol}
For a Banach lattice $E$, the following are equivalent:
\begin{enumerate}
    \item $E$ has  property (d).
    \item $c_0(E)$ has property (d).
    \item $\ell_p(E)$ has  property (d) for some $1 \leq p < \infty$.
    \item $\ell_p(E)$ has property (d) for all $1 \leq p < \infty$.
\end{enumerate}
\end{corol}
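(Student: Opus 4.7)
The plan is to observe that this corollary is essentially a direct specialization of Proposition \ref{expropd2} and Proposition \ref{expropd3} to the constant family $E_n = E$ for all $n \in \N$. Under this choice, the statement ``$E_n$ has property (d) for all $n$'' reduces to the single condition ``$E$ has property (d)'', so the two propositions translate immediately into equivalences between property (d) of $E$ and property (d) of the corresponding vector-valued sequence space.

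Concretely, I would proceed as follows. First, applying Proposition \ref{expropd2} with $E_n = E$ yields the equivalence (1) $\iff$ (2), since $(\bigoplus_{n \in \N} E_n)_0 = c_0(E)$ by definition. Next, applying Proposition \ref{expropd3} with $E_n = E$ for a fixed $1 \leq p < \infty$ yields the equivalence (1) $\iff$ ($\ell_p(E)$ has property (d)), which gives both (1) $\iff$ (3) (take any specific $p$) and (1) $\iff$ (4) (quantify over all $p$).

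To assemble the final cycle, I would record the implications (4) $\Rightarrow$ (3) as trivial, (3) $\Rightarrow$ (1) from Proposition \ref{expropd3}, (1) $\Rightarrow$ (2) from Proposition \ref{expropd2}, (2) $\Rightarrow$ (1) from Proposition \ref{expropd2}, and (1) $\Rightarrow$ (4) by invoking Proposition \ref{expropd3} for each $1 \leq p < \infty$. Since each step is an immediate consequence of an already established proposition, there is no real obstacle; the only thing to take care of is to write the argument in a way that makes the reduction to the constant family explicit so that the reader sees where the two propositions are being used.
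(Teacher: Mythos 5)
Your proposal is correct and follows exactly the paper's intended route: the paper presents this corollary as an immediate consequence of Propositions \ref{expropd2} and \ref{expropd3} applied to the constant family $E_n = E$, which is precisely your specialization argument. Nothing is missing.
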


\smallskip

We observe that every bounded linear  operator $T: E \to c_0$ is uniquely determined by a weak* null sequence $(x_n') \subset E'$ such that $T(x) = (x_n'(x))$ for all $x \in E.$ When this  sequence is disjoint, we  say that $T$ is a \textit{disjoint operator}.
\smallskip

The next  Theorem characterizes property (d) with the regularity of a disjoint  linear operator on $E.$ We recall that a linear operator between Banach lattices is said to be \textit{regular} if it can be written as difference of two positive operators, or equivalently, if it is dominated by a positive operator.

\begin{teo} \label{propd8}
A Banach lattice $E$ has property (d) if and only if every disjoint linear  operator on $E$ is regular.
\end{teo}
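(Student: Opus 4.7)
The plan is to use the sequential characterization of property (d) together with the well-known fact that a bounded operator $T : E \to c_0$ is regular iff there is a positive operator $S : E \to c_0$ with $|T(x)| \le S(|x|)$ for all $x \in E$ (equivalently, $T$ is dominated by a positive operator).

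For the forward implication, assume $E$ has property (d) and let $T(x) = (x_n'(x))$ be a disjoint operator, determined by a disjoint weak* null sequence $(x_n') \subset E'$. Since $(x_n')$ is disjoint, property (d) gives $|x_n'| \cvfe 0$, so the formula $S(x) = (|x_n'|(x))_n$ makes sense on $E^+$ and lands in $c_0$. Extending by linearity via $S(x) = S(x^+) - S(x^-)$, I would verify that $S$ is a well-defined bounded linear operator into $c_0$: boundedness comes from the uniform bound $\|x_n'\| \le M$ (Banach--Steinhaus on the weak* null sequence $(x_n')$) together with $\|x^\pm\| \le \|x\|$. Then for $x \in E^+$ one has $|T(x)_n| = |x_n'(x)| \le |x_n'|(x) = S(x)_n$, so $|T(x)| \le S(x)$ coordinatewise in $c_0$. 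Splitting $T = \frac{1}{2}(S+T) - \frac{1}{2}(S-T)$ and checking on $x \in E^+$ that $(\frac{1}{2}(S\pm T)(x))_n = (x_n')^{\pm}(x) \ge 0$ realises $T$ as a difference of positive operators, so $T$ is regular.

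For the converse, assume every disjoint operator on $E$ is regular, and let $(f_n) \subset E'$ be a disjoint weak* null sequence. Define $T: E \to c_0$ by $T(x) = (f_n(x))_n$; this is a disjoint operator. By hypothesis $T = T_1 - T_2$ with positive $T_i : E \to c_0$, and setting $S = T_1 + T_2$ one has $|T(x)| \le S(|x|)$ in $c_0$ for every $x \in E$. Fix $x \in E^+$; for any $y \in E$ with $|y| \le x$,
\[
 |f_n(y)| = |T(y)_n| \le S(|y|)_n \le S(x)_n,
\]
so taking the supremum over such $y$ gives $|f_n|(x) \le S(x)_n$. Since $S(x) \in c_0$, $|f_n|(x) \to 0$ for every $x \in E^+$, which means $|f_n| \cvfe 0$. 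Hence $E$ has property (d).

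The only genuinely delicate point is verifying the equivalence between "$T$ is regular" and "$T$ is dominated by a positive operator into $c_0$"; since $c_0$ is not Dedekind complete one cannot invoke the modulus $|T|$ in $L_r(E,c_0)$, but the bare equivalence needed here follows directly from the definition of regular operator via the splitting/recombination argument used above. Everything else reduces to routine manipulation with $|x_n'|(x) = \sup_{|y|\le x} |x_n'(y)|$ and the componentwise lattice structure of $c_0$.
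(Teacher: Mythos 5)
Your proof is correct and follows essentially the same route as the paper: in the forward direction both construct the dominating positive operator $S(x) = (|x_n'|(x))_n$ (which lands in $c_0$ precisely because of property (d)), and in the converse both extract a positive operator dominating $T$ and read off $|f_n|(x)\to 0$ from the fact that it maps into $c_0$ (the paper via $(x_n')^+\le y_n'$, you via $|f_n|(x)\le (T_1+T_2)(x)_n$ — an immaterial difference). Your extra care about the equivalence of "regular" and "dominated by a positive operator" is reasonable but not needed here, since the paper adopts that equivalence as the definition.
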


\proof
Assume that $E$ has property (d) and let $T: E \to c_0$ be a disjoint linear  operator.  Then there is a disjoint sequence $(x_n') \subset E'$ with  $T(x) = (x_n'(x))$ for all $n$. Since $E$ has property (d), we can consider the positive operator $S: E \to c_0$ defined by $S(x) = (|x_n'|(x))$ for all $x \in E$. It follows that $S \geq T$, thus $T$ is regular.

Let $(x_n') \subset E'$ be a disjoint weak* null sequence and let $T: E \to c_0$ be the disjoint operator such that $T(x) = (x_n'(x))$. By hypothesis, $T$ is regular. So we can find a positive linear operator $S \geq T$. Let $(y_n') \subset (E')^+$ be the weak* null sequence such that $S(x) = (y_n'(x))$ for all $x$. In particular, if $x \in E^+$, then $x_n'(x) \leq y_n'(x)$ for all $n$. Thus $x_n' \leq y_n'$  and  $(x_n')^+ \leq y_n'$. Consequently, $(x_n')^+ \cvfe 0$ in $E'$. Hence $(x_n')^- = (x_n')^+ - x_n' \cvfe 0$ in $E'$. Therefore, $|x_n'| \cvfe 0$.
\qed

\medskip

We remark that property (d) is not preserved by closed sublattices. For instance, $E = \ell_\infty$ has property (d), however $F = c$ is a closed sublattice of $E$ without property (d). Yet, we have that property (d) is preserved by projection bands.  Recall that a band $B$ in $E$ is said to be a \textit{projection band} if there exists a linear projection $P: E \to E$ satisfying $P(E) = B$ and $0 \leq Px \leq x$ for all $x \in E^+$. Such projection is called a \textit{band projection} (see Definition 1.2.1 in \cite{meyer}).


\begin{prop} \label{propd10}
Let $E$ be Banach lattice with property (d). If $F$ is a projection band in $E$, then $F$ has property (d).
\end{prop}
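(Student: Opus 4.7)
The plan is to lift a disjoint weak* null sequence from $F'$ to $E'$ via the band projection, apply property (d) of $E$, and then restrict back to $F$. Let $P : E \to E$ be the band projection with $P(E) = F$, let $(g_n) \subset F'$ be any disjoint weak* null sequence, and set $f_n := g_n \circ P \in E'$. Since $Px \in F$ for every $x \in E$, weak* convergence is immediate: $f_n(x) = g_n(Px) \to 0$.

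The key technical step is the identity $|f_n|(x) = |g_n|(Px)$ for every $x \in E^+$. This uses two standard properties of band projections: first, $|Py| = P|y|$, so $|y| \le x$ forces $|Py| \le Px$; second, $P$ fixes $F$ pointwise, so every $z \in F$ with $|z| \le Px$ arises as $Py$ for $y = z \in E$. Thus the Riesz–Kantorovich supremum defining $|f_n|(x)$ ranges exactly over $[-Px,Px]\cap F$ and equals $|g_n|(Px)$.

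Next we verify that $(f_n)$ is disjoint in $E'$. For $x \in E^+$ the Riesz–Kantorovich infimum gives
\[
(|f_n|\wedge|f_m|)(x) = \inf\{\,|g_n|(Pu) + |g_m|(Pv)\, :\, u,v\in E^+,\ u+v=x\,\}.
\]
Given any decomposition $u' + v' = Px$ in $F^+$, the lift $u := u' + (I-P)x$ and $v := v'$ lies in $E^+$, satisfies $u+v=x$, and yields $Pu = u'$, $Pv = v'$. Taking the infimum over such decompositions bounds $(|f_n|\wedge|f_m|)(x)$ above by $(|g_n|\wedge|g_m|)(Px) = 0$, so $(f_n)$ is disjoint.

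Now property (d) of $E$ gives $|f_n| \cvfe 0$ in $E'$. For every $y \in F^+$, since $Py = y$, we have $|g_n|(y) = |g_n|(Py) = |f_n|(y) \to 0$, whence $|g_n| \cvfe 0$ in $F'$. This establishes property (d) for $F$. The only mildly delicate point is confirming that $(f_n)$ remains disjoint in $E'$; everything else is an immediate consequence of the fact that $P$ is a band projection.
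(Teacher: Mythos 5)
Your proof is correct and follows essentially the same route as the paper: lift the disjoint weak* null sequence from $F'$ to $E'$ via the adjoint of the band projection, apply property (d) in $E$, and restrict back to $F$. The only difference is cosmetic — you verify by hand, via the Riesz–Kantorovich formulas, that $P'$ preserves moduli and disjointness, whereas the paper gets this by showing $P$ is interval preserving and citing the duality theorem that makes $P'$ a Riesz homomorphism.
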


\begin{proof}
Let $P: E \to E$ denote the band projection associated to the projection band $F$. In particular, $0 \leq Px \leq x$ holds for all $x \in E^+$. We claim that $P$ is interval preserving, i.e. $[0,Px] = P([0,x])$ for all $x \in E^+$. Indeed, if $y \in [0, Px]$, i.e. $0 \leq y \leq Px$, we get that $y \in F$, because $Px \in F$ and $F$ is an ideal in $E$, and so $Py = y$. Since $Px \leq x$, we have that $0 \leq Py = y \leq Px \leq x$, which yields that $y \in P([0,x])$. Thus $[0, Px] \subset P([0,x])$. On the other hand, if $y \in P([0,x])$, there exists $z \in [0,x]$ such that $y = Pz$, which implies that $0 \leq y = Pz \leq Px$, hence $y \in [0, Px]$. Then $P$ is interval preserving. Consequently, $P'$ is a Riesz homomorphism (see Theorem 1.4.19 in \cite{meyer}). 
In order to prove that $F$ has property (d), let $(y_n')$ be a disjoint weak*null sequence in $F'$. As $P'$ is a Riesz homomorphism, we get that $(P'y_n')$ is a disjoint weak*null sequence in $E'$, and so $|P'y_n'| \cvf 0$ in $E'$. Finally, given $x \in F^+$, we have that
\begin{align*}
        |y_n'|(x) & = \sup \conj{|y_n'(y)|}{|y| \leq x, \, y \in F} = \sup \conj{|y_n' \circ P (y)|}{|y| \leq x, \, y \in F} \\ 
        & \leq \sup \conj{|P'y_n' (y)|}{|y| \leq x, \, y \in E} = |P'y_n'|(x) \to 0.
\end{align*}
Then $|y_n'| \cvfe 0$ in $F'$. \qed

\end{proof} 

\medskip

As a consequence of Proposition \ref{propd10}, we have the following example:

\begin{ex}
Let $E = \left (\bigoplus_{n \in \N} \ell_2^n \right )_0$. It follows from a commentary after Lemma 10 in \cite{loumir} that $E^\perp = \conj{f \in E'''}{f(x) = 0, \, \forall x \in E}$ is a projection band in $E'''$. Thus, by Proposition \ref{propd10}, $E^\perp$ has property (d).
\end{ex}

\smallskip

Let $E$ be a Banach lattice and let $F$ be a closed ideal in $E$. The quotient space $E/F$ is a Banach lattice endowed with the following partial order: $[x] \leq [y]$ if and only if there exist $x_1 \in [x]$ and $y_1 \in [y]$ such that $x_1 \leq y_1$ (see \cite[p. 100]{alip}). Recall that $(E/F)'$ is isometric isomorphic to the annihilator $F^\perp$ (see Proposition 2.6 in \cite{habala}). As a consequence, we get the next Theorem which is a generalization of Proposition \ref{propd10}.

\begin{teo} \label{propd9}
Let $E$ be a Banach lattice and let $F$ be a closed ideal in $E$. If $E$ has property (d), then $E/F$ has property (d).
\end{teo}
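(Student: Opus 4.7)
The plan is to transfer the problem from $E/F$ back to $E$ through the quotient map $\pi : E \to E/F$ and its adjoint. Since $F$ is a closed ideal, $\pi$ is a surjective Riesz homomorphism and the annihilator $F^\perp$ is a band in $E'$. Moreover, the standard isometric isomorphism $(E/F)' \to F^\perp$, $\phi \mapsto \phi \circ \pi$, is in fact a Riesz isomorphism. This reduces to the identity $|\phi| \circ \pi = |\phi \circ \pi|$, which I would verify for $x \in E^+$ by noting that any $y \in E/F$ with $|y| \le \pi x$ can be written as $y = \pi z'$ with $|z'| \le x$: start from any lifting $z \in E$ of $y$ and replace it by $z' = (-x) \vee z \wedge x$, using that $\pi$ is a lattice homomorphism to check $\pi z' = y$.

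With this identification in hand, take a disjoint weak* null sequence $(\phi_n) \subset (E/F)'$ and set $\tilde\phi_n := \phi_n \circ \pi \in F^\perp \subset E'$. The sequence $(\tilde\phi_n)$ inherits disjointness in $E'$ (because $F^\perp$ is a sublattice of $E'$ and $\phi \mapsto \phi \circ \pi$ is a Riesz isomorphism) and is weak* null in $E'$ (because $\tilde\phi_n(x) = \phi_n(\pi x) \to 0$ for every $x \in E$). Applying property (d) of $E$ yields $|\tilde\phi_n| \cvfe 0$ in $E'$. Using $|\tilde\phi_n| = |\phi_n| \circ \pi$ once more, for every $x \in E$ we obtain $|\phi_n|(\pi x) = |\tilde\phi_n|(x) \to 0$; since $\pi$ is surjective, this gives $|\phi_n| \cvfe 0$ in $(E/F)'$, as required.

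The main technical point is ensuring that the standard isometric identification $(E/F)' \cong F^\perp$ from Proposition 2.6 of \cite{habala} actually respects the lattice structure, so that disjointness and the absolute values transfer faithfully. Once the equality $|\phi| \circ \pi = |\phi \circ \pi|$ is pinned down, the remainder is a routine pull-back through $\pi$ of the property (d) hypothesis on $E$.
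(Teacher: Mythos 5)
Your argument is correct and follows essentially the same route as the paper: identify $(E/F)'$ with $F^\perp$ via the adjoint $\pi'$ of the quotient map, check that this identification is a Riesz homomorphism (so that $|\phi\circ\pi| = |\phi|\circ\pi$), push a disjoint weak* null sequence forward into $E'$, apply property (d) there, and pull the conclusion back using surjectivity of $\pi$. The only difference is cosmetic: where the paper invokes Theorem 1.18 of Aliprantis--Burkinshaw for the equality of the two suprema, you verify the underlying interval-lifting step directly via the truncation $z' = ((-x)\vee z)\wedge x$, which is a valid and slightly more self-contained way to establish the same identity.
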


\begin{proof}
Let $\pi: E \to E/F$ be the canonical projection and consider its dual operator $\pi': (E/F)' \to F^\perp$, which is an isometric isomorphism. We show that $\pi'$ is a Riesz homomorphism, i.e. $|\pi'f| = \pi'|f|$ for all $f \in (E/F)'$. Indeed, given $f \in (E/F)'$ and $x \in E^+$, it follows from Theorem 1.18 in \cite{alip} that
\begin{align*}
    |\pi'f|(x) & = \sup \conj{|\pi'f(y)|}{|y| \leq x}  = \sup \conj{|f([y])|}{|[y]| \leq [x]} 
             = |f|([x]) = (\pi'|f|)(x).
\end{align*}

If $(f_n) \subset (E/F)'$ be a disjoint weak* null sequence, as $\pi'$ is a Riesz homomorphism, then $(\pi'f_n)$ is a disjoint sequence in $F^\perp$ and so in $E'$. If $x \in E$, we have that
$ \pi'f_n (x)  = f_n([x]) \to 0. $
Thus $(\pi'f_n)$ is a disjoint weak* null sequence in $E'$. As $E$ has property (d), $\pi'|f_n| = |\pi'f_n| \cvfe 0$ in $E'$. If $[x] \in E/F$, we have that
$ |f_n|([x])= \pi'|f_n|(x) \to 0. $
Hence $|f_n| \cvfe 0$ and consequently $E/F$ has property (d). \qed
\end{proof}

\medskip

It follows from Theorem \ref{propd9} that $\ell_\infty/c_0$ has property (d), while it is not $\sigma$-Dedekind complete (see Remark 1.5 in \cite{wnuk}). This shows that property (d) is, in a sense, a more robust concept that $\sigma$-Dedekind completeness.

\medskip

We conclude this section with the following question:

\medskip

\noindent \textbf{Question: } Let $E$ be a Banach lattice with property (d), is it true that  $\ell_\infty(E)$ has property (d)?

\section{The class of  the almost limited completely continuous operators}

The study of certain geometric properties in Banach spaces is largely connected with observing the behavior of bounded linear operators. Whenever a new class or a new property appears in Banach spaces, it is natural to study the relationship of such  property in the context of
operators. The objective here is to study some operator classes from properties in Banach lattices, more specifically, we will study the class of the almost limited completely continuous operators which is connected to the sGP property in Banach lattices.

Let us recall some necessary definitions.
We recall that $T: X \to Y$ is a \textit{completely continuous} linear operator if $Tx_n \to 0$ for every weakly null sequence $(x_n) \subset X$. Since the limited and weakly convergent sequences play an important role in the study of Banach spaces with the GP property, the authors in \cite{salimi} introduced and studied the class of limited completely continuous operators.
Following \cite{salimi}, a linear operator $T: X \to Y$ is \textit{limited completely continuous} (lcc) if $Tx_n \to 0$ for every limited weakly null sequence $(x_n) \subset X$. The class of completely continuous operators and the class of   limited completely continuous operators will be denoted by $Cc(X;Y)$ and $Lcc(X;Y)$, respectively.

As the lattice structure is, in general, distinct of the Banach space, in the setting of Banach lattices is natural study known properties in the class of Banach spaces under a Banach lattice point of view. By considering almost limited sequences instead of limited sequences, the authors in \cite{ardakani} introduced the class of alcc operators and studied its relation with the sGP property. Following \cite{ardakani}, a bounded linear operator $T: E \to Y$ is said to be an \textit{almost limited completely continuous} (alcc operator) if $Tx_n \to 0$ in $Y$ for every almost limited weakly null sequence $(x_n) \subset E$. This class is denoted by $L^a cc (E;Y)$. Moreover, the authors in \cite{ardakani} showed that if $E$ has the sGP property (resp. $E$ has the wDP* property), then $L^a cc(E;Y) = L(E;Y)$ (resp. $L^a cc (E;Y) = Cc(E;Y)$) for every Banach space $Y$ (see Theorem 3.4 and page 22 in \cite{ardakani}, respectively).

In the Banach space theory, it is usual to connect classes of operators with classes of sets. In \cite{salimi2}, the authors introduced the class of the L-limited sets and studied its relations with the class of lcc operators, a subset $A$ of a dual space $X'$ is a \textit{L-limited set} if every limited weakly null sequence $(x_n)$ in $X$ converges uniformly to zero on $A$.

In the same way the authors in \cite{ardakani} introduced the alcc operators in order to establish a "lattice" version of the lcc operators by using almost limited sequences instead of limited sequences, we are going to consider almost limited sequences in the definition above in the place of limited sequences. Following this idea, we introduce the class of strong L-limited sets.

\begin{df} \label{lal1}
A subset $A$ of a dual space $E'$ is called an \textit{strong L-limited set} if every almost limited weakly null sequence $(x_n)$ in $E$ converges uniformly to zero on $A$. 
\end{df}

It is clear that  every strong L-limited subset of $E'$ is L-limited. We will see after Proposition \ref{lal3} that there exist L-limited sets which are not strong L-limited. 

\begin{ex}
Let $E = C(K)$ where $K$ is a compact metric space. It follows from Theorem 2.12 in \cite{ardakani} that every almost limited weakly null sequence in $E$ is norm null. Consequently, $B_{E'}$ is an strong L-limited set.
\end{ex}


\begin{rema} \label{lal2}
Let $E$ be a Banach lattice and let $Y$ be a Banach space.
A bounded linear operator $T: E \to Y$ is alcc if and only if $T'(B_{Y'})$ is a strong L-limited set in $E'$. 

\end{rema}

In \cite{ardakani} the authors showed that if $E'$ has a weak unit or if $E$
has order continuous norm, then $E$ has the sGP if and only if every almost limited weakly null sequence is norm null. From this fact, the following result is immediate:

\begin{prop} \label{lal3}
Assume that $E'$ has a weak unit or $E$ has order continuous norm. Then $E$ has the sGP property if and only if $B_{E'}$ is an strong L-limited set.
\end{prop}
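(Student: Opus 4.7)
The plan is to observe that, under either hypothesis, both sides of the desired equivalence reduce to the same concrete statement, namely that every almost limited weakly null sequence in $E$ is norm null. The proof is therefore essentially an unwinding of definitions, plugging into the quoted result from \cite{ardakani}.

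First I would translate the right-hand side. By Definition \ref{lal1}, $B_{E'}$ is a strong L-limited set precisely when, for every almost limited weakly null sequence $(x_n) \subset E$,
\[
\sup_{f \in B_{E'}} |f(x_n)| \longrightarrow 0.
\]
Since $\sup_{f \in B_{E'}} |f(x_n)| = \norma{x_n}$, this happens if and only if every almost limited weakly null sequence in $E$ is norm null.

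Next I would invoke the result cited immediately before the proposition: under the hypothesis that $E'$ has a weak unit or $E$ has order continuous norm, $E$ has the sGP property if and only if every almost limited weakly null sequence in $E$ is norm null. Chaining these two equivalences yields the equivalence of the sGP property of $E$ with $B_{E'}$ being a strong L-limited set.

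There is no genuine obstacle here; the content of the proposition is really the recognition that the defining condition for $B_{E'}$ being strong L-limited is, by duality, exactly the norm-nullity of almost limited weakly null sequences, and this is precisely the characterization of sGP available under the stated assumption.
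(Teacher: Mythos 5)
Your proposal is correct and is exactly the argument the paper intends: the paper derives the proposition as ``immediate'' from the cited characterization in \cite{ardakani}, and your unwinding of the definition of strong L-limited via $\sup_{f \in B_{E'}} |f(x_n)| = \norma{x_n}$ is precisely the missing link. Nothing further is needed.
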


As a consequence of Proposition \ref{lal3}, we have that $B_{L_p[0,1]}$ with $1 < p \leq \infty$ is not strong L-limited, even though they are L-limited (see Theorem 2.3 in \cite{salimi2}). Moreover, since $c_0$ and $\ell_p$ with $1 \leq p < \infty$ have order continuous norm and the sGP property (see page 17 in \cite{ardakani}), Proposition \ref{lal3} yields that $B_{\ell_p}$ with $1 \leq p \leq \infty$ is an strong L-limited set.

\smallskip

We observe that $B_{L_{\infty} [0,1]} = \sol{\{ \chi_{[0,1]}\}}$. Thus, the solid hull of a strong L-limited set is not necessarily strong L-limited. In the next Theorem, we give conditions on $E$ so that the solid hull of an strong L-limited also is strong L-limited.

\begin{teo} \label{lal4}
Let $E$ be a Banach lattice with order continuous norm. If the lattice operations in $E$ are sequentially weakly continuous, then the solid hull of an strong L-limited subset of $E'$ also is strong L-limited.
\end{teo}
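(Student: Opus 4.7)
The plan is to argue by contradiction. Suppose $\sol{A}$ fails to be strong L-limited; then there exist $\epsilon > 0$, an almost limited weakly null sequence $(x_n) \subset E$, and a sequence $(g_n) \subset \sol{A}$ with $|g_n(x_n)| \geq \epsilon$ for every $n$. The definition of the solid hull supplies $f_n \in A$ with $|g_n| \leq |f_n|$, so
\[ \epsilon \leq |g_n(x_n)| \leq |g_n|(|x_n|) \leq |f_n|(|x_n|) = \sup\{\, |f_n(y)| : |y| \leq |x_n|\, \}, \]
the last equality being the standard variational formula for the modulus of a functional (see \cite{alip}). Hence one can choose $y_n \in E$ with $|y_n| \leq |x_n|$ and $|f_n(y_n)| \geq \epsilon/2$. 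The key claim will be that $(y_n)$ is itself an almost limited weakly null sequence; once this is proved, the strong L-limitedness of $A$ forces $\sup_{f \in A}|f(y_n)| \to 0$, contradicting $|f_n(y_n)| \geq \epsilon/2$.

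For the almost limited property of $(y_n)$, I would observe that $|y_n| \leq |x_n|$ places $\{y_n : n \in \N\}$ inside $\sol{\{x_n : n \in \N\}}$. Order continuity of the norm makes $E$ $\sigma$-Dedekind complete, and by \cite{chen} every $\sigma$-Dedekind complete Banach lattice has property (d); Lemma \ref{lemadom1} then gives that $\sol{\{x_n : n \in \N\}}$, and hence $\{y_n : n \in \N\}$, is almost limited.

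For the weak nullity, the sequential weak continuity of the lattice operations on $E$ together with $x_n \to 0$ weakly yields $|x_n| \to 0$ weakly. Since $0 \leq y_n^+ \leq |x_n|$ and $0 \leq y_n^- \leq |x_n|$, every positive $\varphi \in E'$ satisfies $0 \leq \varphi(y_n^\pm) \leq \varphi(|x_n|) \to 0$; writing an arbitrary functional on $E$ as a difference of positive ones then gives $y_n^\pm \to 0$ weakly, and therefore $y_n = y_n^+ - y_n^- \to 0$ weakly.

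The principal difficulty I anticipate is this last step: the inequality $|y_n| \leq |x_n|$ does not by itself transfer weak nullity from $x_n$ to $y_n$, so one must genuinely engage the sequential weak continuity hypothesis to first move it from $x_n$ to $|x_n|$ and then descend to $y_n^+$ and $y_n^-$ through positive functionals. Once weak nullity is in hand, the almost-limited half, which is where the order continuity hypothesis enters via Lemma \ref{lemadom1}, slots in without further complication.
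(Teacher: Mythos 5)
Your proof is correct and follows the same overall strategy as the paper: argue by contradiction, dominate the offending functionals by elements of $A$, produce a sequence $(y_n)$ with $|y_n|\leq |x_n|$ on which some $f_n\in A$ stays bounded away from zero, and then show $(y_n)$ is almost limited (via Lemma \ref{lemadom1}, since order continuity gives $\sigma$-Dedekind completeness and hence property (d)) and weakly null (via the weak sequential continuity of the lattice operations, descending through positive functionals). The one genuine difference is how $y_n$ is produced: the paper invokes Theorem 1.23 of \cite{alip} to realize $|f_n|(|x_n|)$ exactly as $z_n''(f_n)$ for some $z_n''\in E''$ with $|z_n''|\leq J_E(|x_n|)$, and then uses order continuity a second time (Theorem 4.9 in \cite{alip}, $E$ is an ideal in $E''$) to pull $z_n''$ back to an element $z_n\in E$. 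You instead take an approximate maximizer $y_n\in E$ directly from the Riesz--Kantorovich formula $|f_n|(|x_n|)=\sup\{|f_n(y)|:|y|\leq |x_n|\}$, settling for $|f_n(y_n)|\geq \epsilon/2$. This sidesteps the bidual detour entirely, so in your version order continuity is used only to secure property (d); that is a small but real simplification, and the rest of the argument is unaffected since $\epsilon/2$ serves just as well as $\epsilon$ for the contradiction.
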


\proof
Let $A \subset E'$. Assume that $\sol{A}$ is not strong L-limited. Then there exists  an almost limited weakly null sequence $(x_n) \subset E$ such that $\sup_{x' \in \sol{A}} |x'(x_n)| \geq \epsilon$ for all $n$ and for some $\epsilon > 0$. For each $n$, there exists $x_n' \in \sol{A}$ such that $|x_n'(x_n)| \geq \epsilon$. In particular, for each $n$, we can find $y_n' \in A$ such that $|x_n'| \leq |y_n'|$. 
On the other hand, by Theorem 1.23 of \cite{alip}, there exists $(z_n'') \subset E''$ such that $|z_n''| \leq J_E(|x_n|)$ in $E''$ and $z_n''(y_n') = J_E(|x_n|)(y_n')$ for every $n$. Since $E$ has order continuous norm, $E$ is an ideal of $E''$ (see Theorem 4.9 in \cite{alip}). Then for each $n$, there exists $z_n \in E$ such that $z_n'' = J_E(z_n)$. 
Thus
$$ \epsilon \leq |x_n'(x_n)| \leq |x_n'|(|x_n|) \leq |y_n'|(|x_n|) = z_n^{''}(y_n') = y_n'(z_n) $$
holds for all $n$. 
Since $|z_n| \leq |x_n|$ for all $n$ and $|x_n| \cvf 0$ in $E$, by assumption we have that $z_n \cvf 0$ in $E$. As $E$ has order continuous norm, it has property (d), by Lemma \ref{lemadom1}, $(z_n)$ is almost limited. Hence $A$ cannot be strong L-limited, a contradiction.
\qed

\medskip


    
    


It is known that every weakly compact operator is lcc (see Corollary 2.5 in \cite{salimi}). As a consequence, every relatively weakly compact set in the dual of a Banach space is L-limited (see \cite{salimi2}).  We observe that, if $E$ is a non discrete reflexive Banach lattice (e.g. $L_p[0,1]$ with $1 < p < \infty$), then $E$ does not have the sGP property  by Theorem 2.8 in \cite{ardakani}. On the other hand, as $E$ has order continuous norm, we have that $I_E$ cannot be alcc. As a consequence, $B_{E'}$ is a relatively weakly compact subset of $E'$ which is not strong L-limited. In Proposition \ref{lal6} and Proposition \ref{lal7}, we establish connections between strong L-limited sets, relatively weakly compact sets, alcc operators and weakly compact operators.

\begin{prop} \label{lal6}
Let $E$ be a Banach lattice.
The following assertions are equivalent:
\begin{enumerate}
    \item Every relatively weakly compact set in $E'$ is strong L-limited.
    
    \item For every Banach space $Y$, every weakly compact operator $T: E \to Y$ is alcc.
\end{enumerate}
\end{prop}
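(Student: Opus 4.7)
The plan is to prove the equivalence using Remark \ref{lal2}, which converts the alcc property of an operator into the strong L-limited property of the image of the dual unit ball under its adjoint. This lets us translate freely between operator-theoretic and set-theoretic statements.

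For the implication $(1) \Rightarrow (2)$, I would start with a weakly compact operator $T: E \to Y$. By Gantmacher's theorem, $T': Y' \to E'$ is also weakly compact, so $T'(B_{Y'})$ is a relatively weakly compact subset of $E'$. Hypothesis $(1)$ then says $T'(B_{Y'})$ is strong L-limited, and Remark \ref{lal2} immediately yields that $T$ is alcc. This half is entirely routine.

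For the converse $(2) \Rightarrow (1)$, let $A \subset E'$ be relatively weakly compact. The strategy is to produce a weakly compact operator $T: E \to Y$ whose adjoint image covers $A$, so we can invoke $(2)$. The natural tool is the Davis--Figiel--Johnson--Pe\l czy\'nski factorization: taking $W = \overline{\mathrm{aco}}(A)$, which is absolutely convex and weakly compact in $E'$ (by Krein's theorem), DFJP produces a reflexive Banach space $Z$ and a bounded operator $j: Z \to E'$ with $W \subset j(r B_Z)$ for some $r > 0$. Setting $Y = Z'$ (reflexive) and defining $T: E \to Y$ by $(Tx)(z) = j(z)(x)$, one checks that $T'$ corresponds to $j$ under the canonical identification $Y' = Z'' = Z$, so that $A \subset W \subset r \cdot T'(B_{Y'})$. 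Since $Y$ is reflexive, $T$ is automatically weakly compact, so by $(2)$ $T$ is alcc, so by Remark \ref{lal2} $T'(B_{Y'})$ is strong L-limited. Finally, both scalar multiples and subsets of strong L-limited sets are strong L-limited (immediate from Definition \ref{lal1}, since the suprema of $|x'(x_n)|$ over $A$ are bounded by the corresponding suprema over $r \cdot T'(B_{Y'})$), giving that $A$ is strong L-limited.

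The main obstacle is the factorization step in $(2) \Rightarrow (1)$: one needs to represent the given relatively weakly compact set $A$ as sitting inside the image of $B_{Y'}$ under the adjoint of some operator with domain $E$, rather than just inside the image of the unit ball of some reflexive space via an arbitrary bounded operator into $E'$. The reflexivity of $Y$ (equivalent to that of $Z$) is what guarantees $T$ is weakly compact, and the identification $T' = j$ via reflexivity is the algebraic point that ties everything together. Modulo the standard DFJP construction, no other subtlety intervenes.
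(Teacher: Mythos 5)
Your proof is correct, and the forward implication $(1)\Rightarrow(2)$ is essentially identical to the paper's (weak compactness of $T'$ via Gantmacher, then Remark \ref{lal2}). The backward implication, however, takes a genuinely different route. The paper argues by contraposition: it assumes $A$ is relatively weakly compact but not strong L-limited, extracts sequences $(x_n')\subset A$ and an almost limited weakly null $(x_n)$ with $|x_n'(x_n)|\ge\epsilon$, passes to a weakly convergent subsequence $x_{n_k}'\to x'$, and builds the explicit weakly compact operator $T:E\to c_0$, $T(x)=((x_{n_k}'-x')(x))_k$ (weakly compact by Theorem 5.26 in \cite{alip}), which visibly fails to be alcc since $\|Tx_{n_k}\|\ge\epsilon/2$ eventually. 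Your argument is direct rather than contrapositive: you pass to the closed absolutely convex hull (Krein's theorem), apply the Davis--Figiel--Johnson--Pe\l czy\'nski factorization to get a reflexive $Z$ and $j:Z\to E'$ with $A$ inside a multiple of $j(B_Z)$, and realize $j$ as the adjoint of a (necessarily weakly compact) operator $T:E\to Z'$; the identification $T'=j$ under $Z''=Z$ is correct, and the closing observation that subsets and scalar multiples of strong L-limited sets are strong L-limited is immediate from Definition \ref{lal1}. What each approach buys: the paper's is more elementary and self-contained (only a sequential extraction and a concrete $c_0$-valued operator), while yours invokes heavier machinery (Krein, DFJP) but yields a cleaner structural statement --- every relatively weakly compact subset of $E'$ sits inside $r\,T'(B_{Y'})$ for some weakly compact $T$ --- and avoids the subsequence bookkeeping entirely. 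Both are complete proofs.
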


\proof
$(1) \Rightarrow (2)$ Let $T: E \to Y$ be a weakly compact operator. So $T'(B_{Y'})$ is relatively weakly compact in $E'$ and, as consequence, a strong L-limited set. Therefore $T$ is an alcc operator.

$(2) \Rightarrow (1)$ Assume that there exists a relatively weakly compact set $A \subset E'$ that is not strong L-limited. Without loss of generality, we can assume that there exists $\epsilon > 0$, $(x_n') \subset A$ and an almost limited weakly null sequence such that $|x_n'(x_n)| \geq \epsilon$ for all $n$. Since $A$ is relatively weakly compact, there exists a subsequence $(x_{ n_k}')$ which converges weakly to $x' \in E'$. Define the weakly compact operator $T: E \to c_0$ by $T(x) = ((x_{n_k}' - x')(x))_n$ (see Theorem 5.26 in \cite{alip}). On the other hand, let $k_0 \in \N$ such that $|x'(x_{n_k})| < \epsilon/2$ for all $k \geq k_0$.
If $k \geq k_0$,
$$ \norma{Tx_{n_k}} \geq |x_{n_k}'(x_{n_k}) - x'(x_{n_k})| \geq \epsilon/2. $$
Thus $T$ is not an alcc operator.
\qed

\medskip

Using the same arguments of Theorem 2.8 in \cite{salimi2} and doing the necessary adaptations, we can prove the following Proposition.

\begin{prop} \label{lal7} Let $E$ be a Banach lattice.
The following assertions are equivalent:
\begin{enumerate}
    \item Every strong L-limited set in $E'$ is relatively weakly compact.
    
    \item For every Banach space $Y$, every alcc operator  $T: E \to Y$ is weakly compact.
\end{enumerate}
\end{prop}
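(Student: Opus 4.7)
The plan is to prove the two implications separately, with $(2) \Rightarrow (1)$ being the non-trivial direction. For $(1) \Rightarrow (2)$, the argument is essentially a two-line application of Remark \ref{lal2} together with Gantmacher's theorem: given any Banach space $Y$ and any alcc operator $T : E \to Y$, Remark \ref{lal2} tells us that $T'(B_{Y'})$ is a strong L-limited subset of $E'$; hypothesis (1) then forces $T'(B_{Y'})$ to be relatively weakly compact, which by Gantmacher's theorem yields the weak compactness of $T$.

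For the harder direction $(2) \Rightarrow (1)$, I would argue by contradiction. Suppose $A \subset E'$ is strong L-limited but not relatively weakly compact. By the Eberlein--\v{S}mulian theorem I can extract a sequence $(x_n') \subset A$ that admits no weakly convergent subsequence. Since $A$ is (norm) bounded, so is $(x_n')$, and the assignment $T(x) = (x_n'(x))_n$ defines a bounded linear operator $T : E \to \ell_\infty$.

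Next I would verify that $T$ is alcc directly from the definition: if $(y_k) \subset E$ is any almost limited weakly null sequence, then
\[
 \norma{Ty_k}_\infty \;=\; \sup_n |x_n'(y_k)| \;\leq\; \sup_{x' \in A} |x'(y_k)| \;\longrightarrow\; 0,
\]
since $A$ is strong L-limited; thus $T \in L^a cc(E;\ell_\infty)$. On the other hand, the adjoint $T' : \ell_\infty' \to E'$ satisfies $T'(e_n) = x_n'$ for the canonical unit vectors $e_n$ of $\ell_1 \hookrightarrow \ell_\infty'$, so $(x_n') \subset T'(B_{\ell_\infty'})$. Since $(x_n')$ has no weakly convergent subsequence, $T'(B_{\ell_\infty'})$ is not relatively weakly compact, and Gantmacher's theorem rules out $T$ being weakly compact. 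This contradicts (2), completing the proof.

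The only real subtlety is the alcc verification for $T$, which hinges on the simple observation that the supremum over the countable set $\{x_n'\}$ is dominated by the supremum over all of $A$; the rest is standard Eberlein--\v{S}mulian plus Gantmacher, parallel to the argument of Theorem 2.8 in \cite{salimi2}.
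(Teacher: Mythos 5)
Your proof is correct and is essentially the paper's own argument: the paper proves this proposition by deferring to Theorem 2.8 of \cite{salimi2} ``with the necessary adaptations,'' and those adaptations are exactly what you carry out --- Remark \ref{lal2} plus Gantmacher for $(1)\Rightarrow(2)$, and for $(2)\Rightarrow(1)$ the Eberlein--\v{S}mulian extraction of $(x_n')$ with no weakly convergent subsequence, the operator $T:E\to\ell_\infty$, $Tx=(x_n'(x))_n$, the alcc verification via the strong L-limitedness of $A$, and $T'(e_n)=x_n'$ to defeat weak compactness. No gaps; the only point worth a word is that $A$ must be norm bounded for $T$ to map into $\ell_\infty$, which is implicit in the (standard) convention that strong L-limited sets are bounded.
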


From Propositions \ref{lal6} and \ref{lal7}, we get the next Corollary.

\begin{corol} \label{lal8}
Let $E$ be a Banach lattice. The class of strong L-limited sets coincides with the class of relatively weakly compact sets in $E'$ if and only if  $L^a_{cc}(E;Y)=W(E;Y)$, for every Banach space $Y$.
\end{corol}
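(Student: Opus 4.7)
The plan is to split the claimed biconditional into its two component inclusions and invoke Propositions \ref{lal6} and \ref{lal7} separately on each. Observe that the equality of the class of strong L-limited subsets of $E'$ with the class of relatively weakly compact subsets of $E'$ amounts to the conjunction of:
\begin{enumerate}
\item[(a)] every relatively weakly compact subset of $E'$ is strong L-limited;
\item[(b)] every strong L-limited subset of $E'$ is relatively weakly compact.
\end{enumerate}
Likewise, the identity $L^a_{cc}(E;Y) = W(E;Y)$ for every Banach space $Y$ is the conjunction of $W(E;Y) \subseteq L^a_{cc}(E;Y)$ for every $Y$ (call this (a$'$)) and $L^a_{cc}(E;Y) \subseteq W(E;Y)$ for every $Y$ (call this (b$'$)).

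The next step is to match these up. Proposition \ref{lal6} is precisely the equivalence (a) $\iff$ (a$'$), while Proposition \ref{lal7} is precisely the equivalence (b) $\iff$ (b$'$). Conjoining these two biconditionals yields the claimed biconditional of the corollary, since both sides of the corollary are conjunctions of one clause of each type.

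There is no genuine obstacle here; the argument is purely logical. The only thing to be careful about is that the universal quantifier ``for every Banach space $Y$'' distributes across the conjunction (a$'$) $\wedge$ (b$'$), which is immediate, so that the corollary follows at once from combining the two prior propositions.
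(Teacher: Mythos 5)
Your proposal is correct and follows exactly the paper's route: the corollary is stated there as an immediate consequence of Propositions \ref{lal6} and \ref{lal7}, obtained by splitting each side of the biconditional into its two inclusions and matching them with the two propositions. Your write-up simply makes explicit the logical bookkeeping the paper leaves implicit.
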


Recall that a linear operator $T: E \to F$ is said to be order bounded if it takes order bounded sets from $E$ onto order bounded sets in $F$.
In the next result, we prove that every order bounded linear  operator from a Banach lattice with property (d) into a Banach lattice with the sGP property is alcc.

\begin{prop} \label{alcc0}
Let $E$ and $F$ be Banach lattices with   property (d).  Let $T: E \to F$ be an order bounded linear operator.  If  $F$ has the sGP property, then $T$ is  an alcc operator.
\end{prop}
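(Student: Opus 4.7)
The plan is to chain together Lemma \ref{lemadom1}, Theorem \ref{lemadom2}, and the definition of the sGP property, using only that weakly null sequences are bounded. Given an almost limited weakly null sequence $(x_n) \subset E$, the goal is to show $\norma{Tx_n} \to 0$.

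First I would observe that the set $A = \{x_n : n \in \N\}$ is bounded (since weakly null sequences are bounded) and almost limited by hypothesis. Because $E$ has property (d), Lemma \ref{lemadom1} guarantees that $\sol{A}$ is also almost limited in $E$; note that $\sol{A}$ is by definition solid. Now $T$ is order bounded and $F$ has property (d), so Theorem \ref{lemadom2} applies to the solid almost limited set $\sol{A}$ and yields that $T(\sol{A})$ is almost limited in $F$. Since $Tx_n \in T(\sol{A})$ for every $n$, the sequence $(Tx_n)$ lies in an almost limited subset of $F$.

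Next, since $F$ has the sGP property, every almost limited subset of $F$ is relatively compact, so $\{Tx_n : n \in \N\}$ is relatively compact in $F$. On the other hand, $T$ is continuous, hence weak-to-weak continuous, so $Tx_n \cvf 0$ in $F$. A relatively compact sequence that converges weakly to $0$ must converge to $0$ in norm: every subsequence of $(Tx_n)$ has a norm-convergent sub-subsequence whose limit is forced to be $0$ by uniqueness of weak limits. Therefore $\norma{Tx_n} \to 0$, proving that $T$ is alcc.

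The argument is essentially a direct concatenation of the earlier results of the paper, so there is no serious obstacle; the only point requiring care is to apply Theorem \ref{lemadom2} to the solid hull $\sol{A}$ rather than to $A$ itself, which is the reason the hypothesis ``property (d)'' is assumed on the domain $E$ as well, so that Lemma \ref{lemadom1} can first be invoked to produce a solid almost limited set.
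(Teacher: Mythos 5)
Your proposal is correct and follows essentially the same route as the paper: form the solid hull of the sequence, apply Lemma \ref{lemadom1} (using property (d) of $E$) and then Theorem \ref{lemadom2} (using order boundedness of $T$ and property (d) of $F$) to get that $T(\sol{A})$ is almost limited, and conclude via the sGP property of $F$. Your extra subsequence argument showing that a relatively compact weakly null sequence is norm null just makes explicit a step the paper leaves implicit.
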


\proof
Let $(x_n) \subset E$ be an almost limited weakly null sequence and let $A = \sol{(x_n)_n}$.  By Lemma \ref{lemadom1} and Theorem \ref{lemadom2}, we have  that  $T(A)$ is almost limited. As $(Tx_n) \subset T(A)$, $(Tx_n)$ is an almost limited sequence in $F$. Then $Tx_n \cvf 0$ in $F$, and so $Tx_n \to 0$ in $Y$ since $F$ has  the sGP property.
\qed

\smallskip


Now, we give some remarks concerning Proposition \ref{alcc0}.

\begin{remas} \label{exalcc}
\begin{enumerate}
    \item Its is known that for every $1 \leq p \leq \infty$, the Banach lattice $E = L_p[0,1]$ has property (d). Therefore if $F$ is any Banach lattice with both property (d) and the sGP property, we get from Proposition \ref{alcc0} that $L^r(E; F) \subset L^a cc(E; F)$.

    \item Let $E$ be a Banach lattice that contains a lattice copy of $\ell_1$. Then, there exists a positive projection $P: E \to \ell_1$ (see Proposition 2.3.11 in \cite{meyer}). As a consequence of Proposition \ref{alcc0}, we get that $P$ is an alcc operator. In particular, Banach lattices with the positive Schur property contains lattice copy of $\ell_1$ (see page 19 in \cite{wnuksurv}). Besides, we get from Remark \ref{lal2} that $P'(B_{\ell_\infty})$ is an strong L-limited set in $E'$.

    \item There is a Banach lattice $F$ with property (d) which does not have the sGP property, such that, for any Banach lattice $E$, every order bounded operator $T: E \to F$ is alcc. {\normalfont Indeed, let $K$ be a compact metric space and assume that $K$ is $\sigma$-Stonian (i.e. the closure of every open $F_\sigma$ set is open). It follows from Theorem 2.12 in \cite{ardakani} that $F = C(K)$ does not have the sGP property. On the other hand, Proposition 2.1.4 in \cite{meyer} yields that $F = C(K)$ is $\sigma$-Dedekind complete, so it has property (d). Given a Banach lattice $E$, we claim that every bounded operator $T: E \to F$ is completely continuous, hence alcc. Indeed, if $x_n \cvf 0$ in $E$, we have that $Tx_n \cvf 0$ in $F$. In particular, $(Tx_n)$ is a bounded sequence. As $C(K)$ has a unit, $(Tx_n)$ is contained in an order interval. Since $C(K)$ has property (d), it follows from Proposition \ref{prop22} that order intervals are almost limited sets. Consequently, $(Tx_n)$ is an almost limited weakly null sequence. Theorem 2.12 in \cite{ardakani} yields that $\norma{Tx_n} \to 0$.}
\end{enumerate}
\end{remas}

\smallskip

 The next example shows that the assumption of $T$ being order bounded linear operator in Proposition \ref{alcc0} is essential.

\begin{ex}
Let $(r_k)$ denote the Rademacher functions and let
$T: L_1[0,1] \to c_0$ be the operator given by
$$  T (f) = \left (\int_0^1 f(t) \, r_k(t) \, dt \right )_k.
$$
We observe that $T$ is well-defined since $r_n \cvf 0$ in $L_1[0,1]$ and that $T$ is not an alcc operator, once the sequence $(r_n)$ is almost limited  weakly null in $ L_1[0,1]$, but
$ \norma{ T(r_n)}_\infty = \sup_{k \in \N} \left | \int_0^1 r_n(t)\, r_k(t) dt \right | \geq 1 \quad \text{for all } n \in \N. $ We claim that $T$
is not an order bounded operator. Indeed,
as $c_0$ is Dedekind complete, it suffices to check that $T$ is not regular. Suppose that $T$ is regular.  So we assume that there exists $S \geq 0$ with $S \geq T$. The components of $S$ lead to a weak* null positive sequence $(\varphi_n) \subset (L_1[0,1])'$ such that $S(f) = (\varphi_n(f))_n$. Since $(L_1[0,1])' = L_\infty [0,1]$, there exists $(f_n) \subset (L_\infty [0,1])^+$ with $\varphi_n(f) = \int_0^1 f_n(t) \, f(t) \, dt$. 
Since $T\leq S,$ we get that $r_n \leq f_n$ in $L_\infty[0,1]$ which means that $f_n \geq r_n^+$. But
$$ \varphi_n (1) = \int_0^1 f_n(t) \, dt \geq \int_0^1 r_n^+(t) \, dt = 1/2 $$
for all $n$, a contradiction.
\end{ex}

In the class of the  positive linear operators in  Banach lattices, we have a dominated type problem. For instance, let $S,T: E \to F$ be  positive operators such that $S \leq T$. The question is, if $T$ has some property, does $S$ also has it? Here we are interested in the class of alcc operators 
and we see that in general the answer is negative, as the next example shows.

\begin{ex} \label{ex1}
Let $R_1: c_0 \to \ell_\infty$ $S_1, S_2: L_1[0,1] \to \ell_\infty$
be given by
$$ R_1(\alpha_i)_i = (\alpha_i)_i, \quad S_2(f) = \left ( \int_0^1 f dx, \int_0^1 f dx, \, \dots \right ) \quad \text{ and } \quad  S_1(f) = \left (\int_0^1 f r_i^+ (x) dx \right )_i. $$
Define $S,T: c_0 \oplus L_1[0,1] \to \ell_\infty$ by
$$S((\alpha_i), f) = R_1((\alpha_i)) + S_1(f) \quad \text{ and } \quad T((\alpha_i), f) = R_1((\alpha_i)) + S_2(f).$$
Note that $0 \leq S \leq T$. 

We claim that $T$ is an alcc operator. If
 $(x_n,f_n) \subset c_0 \oplus L_1[0,1]$ is an almost limited weakly null sequence, then $(x_n) \subset c_0$ and $(f_n) \subset L_1[0,1]$ are both almost limited weakly null sequences. As $c_0$ has the sGP property, we have that  $\norma{x_n}_\infty \to 0$. So,
$$ \norma{R_1(x_n)} = \norma{x_n}_\infty \to 0. $$
On the  other hand, as  $f_n \cvf 0$ in $L_1[0,1]$, it follows that $\int_0^1 f_n \to 0$, what implies that
$$ \norma{S_2(f_n)}_\infty = \sup_i \left | \int_0^1 f_n \right | = \left | \int_0^1 f_n \right | \to 0.$$
Therefore
$$ T(x_n, f_n) = R_1(x_n) + S_2(f_n) \to 0. $$

It is easy to verify that $S$ is not an alcc operator. In fact, $(r_n) \subset L_1[0,1]$ is an almost limited weakly null sequence. However, $\norma{S_1(r_n)}_\infty \geq 1/2$ for all $n$. Hence $(0, r_n)$ is an almost limited weakly null sequence in $c_0 \oplus L_1[0,1]$ such that $S(0,r_n) = S_1(r_n)$ that is not norm null. 

Also note that $R_1$ is not completely continuous, since
$e_n \cvf 0$ in $c_0$ and $\norma{e_n} = 1$. Hence neither $S$ nor $T$ is completely continuous.
\end{ex}

We observe that the order bounded linear operator $S_1: L_1[0,1] \to \ell_\infty$ given in Example \ref{ex1} is a positive non alcc operator. This shows that the assumption of $F$ having the sGP property in Proposition \ref{alcc0} is essential.


\smallskip

The following result was inspired in Theorem 5.89 of \cite{alip}.

\begin{teo} \label{teodom1}
Let  $E$ and $F$ be Banach lattices and let $S,T: E \to F$ be  positive operators such that $S \leq T$ and $T$ is an alcc operator. If  $E$ has property (d) and  the lattice operations in $E$ are weakly sequentially continuous, then $S$ is   an alcc operator.
\end{teo}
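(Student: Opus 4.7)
The plan is to reduce the question for $S$ to the hypothesis on $T$ by passing to the absolute values $|x_n|$, exploiting all three assumptions (property (d), weak sequential continuity of the lattice operations in $E$, and domination $0 \leq S \leq T$).

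Take an almost limited weakly null sequence $(x_n) \subset E$; I want to show $\|Sx_n\|_F \to 0$. First, since the lattice operations in $E$ are weakly sequentially continuous, $|x_n| \to 0$ weakly in $E$. Second, I want to show $(|x_n|)$ is still almost limited. This is where property (d) enters: by Lemma \ref{lemadom1}, the solid hull $\sol{\{x_n : n \in \N\}}$ of the almost limited set $\{x_n : n \in \N\}$ is again almost limited, and each $|x_n|$ satisfies $|\,|x_n|\,| \leq |x_n|$ and so belongs to this solid hull. Hence $(|x_n|)$ is an almost limited weakly null sequence in $E$.

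Now the hypothesis on $T$ gives $\|T|x_n|\|_F \to 0$. To pass back to $S$, I use positivity and domination: for each $n$,
\begin{equation*}
|Sx_n| \leq S|x_n| \leq T|x_n|,
\end{equation*}
where the first inequality is the standard estimate for positive operators ($|Sx| \leq Sx^+ + Sx^- = S|x|$) and the second follows from $S \leq T$ applied to the positive element $|x_n|$. Taking the lattice norm in $F$, monotonicity yields $\|Sx_n\|_F \leq \|T|x_n|\|_F \to 0$, so $S$ is alcc.

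The only mildly delicate point is making sure $(|x_n|)$ remains almost limited; everything else is routine positivity. This is precisely where both assumptions on $E$ are essential and cannot be dropped: weak sequential continuity of the lattice operations gives weak nullity of $(|x_n|)$, while property (d) allows Lemma \ref{lemadom1} to upgrade almost limitedness of $\{x_n\}$ to almost limitedness of its solid hull (hence of $\{|x_n|\}$). Thus I do not anticipate any genuine obstacle beyond writing these two reductions carefully.
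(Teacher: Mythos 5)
Your proof is correct and follows essentially the same route as the paper's: use Lemma \ref{lemadom1} (via property (d)) to see that $(|x_n|)$ is almost limited, use weak sequential continuity of the lattice operations to see that it is weakly null, apply that $T$ is alcc, and conclude via $|Sx_n| \leq S|x_n| \leq T|x_n|$ and monotonicity of the lattice norm. If anything, your write-up is slightly more careful than the paper's at the final step, where the paper writes $Sx_n \leq S|x_n|$ rather than $|Sx_n| \leq S|x_n|$.
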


\proof
Let $(x_n)$ be an almost limited weakly null sequence. By Lemma \ref{lemadom1}, $\sol{(x_n)_n}$ is an almost limited set in $E$. In particular, $(|x_n|)$ is an almost limited sequence. On the other hand, since the lattice operations in $E$ are weakly sequentially continuous, $|x_n| \cv 0$.
As $T$ is alcc, $T|x_n| \to 0$. Thus $ Sx_n \leq S|x_n| \leq T|x_n| \to 0.$
\qed

\medskip

The following theorem was inspired by Kalton-Saab theorem's (Theorem 5.90 in  \cite{alip}).

\begin{teo} \label{teodom2}
Let  $E$ and $F$ be Banach lattices and let $S,T: E \to F$ be  positive operators such that $S \leq T$ and $T$ is an  alcc operator. If  $E$ is $\sigma$-Dedekind complete and  $F$ has order continuous norm, then $S$ is  an alcc operator.
\end{teo}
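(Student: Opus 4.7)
The plan is to mimic the Kalton-Saab theorem (\cite[Theorem 5.90]{alip}) in the setting of alcc operators. Let $(x_n) \subset E$ be an almost limited weakly null sequence; we aim to show $Sx_n \to 0$ in $F$.

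First, since $E$ is $\sigma$-Dedekind complete, $E$ has property (d), so by Lemma \ref{lemadom1} the solid hull $\sol{(x_n)_n}$ is almost limited; in particular $(x_n^+)$, $(x_n^-)$ and $(|x_n|)$ are almost limited sequences in $E^+$. Analogously, since $F$ has order continuous norm, $F$ is $\sigma$-Dedekind complete and hence also enjoys property (d); applying Theorem \ref{lemadom2} to the order bounded operator $T$ then ensures that $T(\sol{(x_n)_n})$ is almost limited in $F$. The chain $|Sx_n| \leq S|x_n| \leq T|x_n|$ together with the monotonicity of the norm on $F$ yields $\norma{Sx_n} \leq \norma{T|x_n|}$, so it suffices to derive a contradiction from the assumption $\norma{T|x_n|} \geq \varepsilon$ for some $\varepsilon > 0$ and all $n$ along a subsequence.

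Next, I would choose $y_n^* \in (F')^+$ with $\norma{y_n^*} \leq 1$ and $y_n^*(T|x_n|) \geq \varepsilon$. The key use of the alcc hypothesis on $T$ is the following: since $(x_n)$ is almost limited and weakly null, $Tx_n \to 0$ in norm, hence $y_n^*(Tx_n) \to 0$. Writing $y_n^*(T|x_n|) = y_n^*(Tx_n^+) + y_n^*(Tx_n^-)$ and $y_n^*(Tx_n) = y_n^*(Tx_n^+) - y_n^*(Tx_n^-)$, one obtains (after passing to a subsequence) that $y_n^*(Tx_n^+) \geq \varepsilon/3$, i.e., $(T'y_n^*)(x_n^+) \geq \varepsilon/3$.

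The crucial final step is the classical Kalton-Saab disjointification: using the $\sigma$-Dedekind completeness of $E$ to disjointify the bounded positive sequence $(T'y_n^*) \subset (E')^+$, together with the order continuity of the norm on $F$ (which via the Meyer-Nieberg theorem makes $T$ order-weakly compact and tames the weak* behavior of $(T'y_n^*)$), one should extract a disjoint weak* null sequence $(h_n) \subset E'$ with $h_n(x_n^+) \geq \varepsilon/4$ along a further subsequence. Since $(x_n^+)$ is contained in the almost limited set $\sol{(x_n)_n}$, this contradicts the defining property of almost limited sets. The main obstacle is precisely this disjointification: one must produce a disjoint weak* null sequence in $E'$ whose action on $(x_n^+)$ remains bounded below, which requires a delicate interplay between $\sigma$-Dedekind completeness of $E$ (used to construct disjoint elements via suprema in $E'$) and order continuity of the norm of $F$ (which ensures that the ``remainder terms'' in the disjointification become negligible in norm).
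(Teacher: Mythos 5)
Your reduction in the second paragraph is where the argument breaks down. From $\norma{Sx_n} \leq \norma{T|x_n|}$ you conclude that it suffices to derive a contradiction from $\norma{T|x_n|} \geq \varepsilon$, i.e.\ to prove $T|x_n| \to 0$. But that statement is strictly stronger than the conclusion of the theorem and is false in general under its hypotheses: take $E = L_1[0,1]$, $F = \R$, $S = T$ the integration functional $f \mapsto \int_0^1 f(t)\,dt$, and $x_n = r_n$ the Rademacher functions. Then $E$ is $\sigma$-Dedekind complete, $F$ has order continuous norm, $T$ is positive and alcc (it is rank one, hence compact), $(r_n)$ is almost limited and weakly null, and $Sr_n = \int_0^1 r_n \to 0$ as the theorem predicts, yet $T|r_n| = \int_0^1 1\,dt = 1$ for every $n$. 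So no contradiction can be extracted from $\norma{T|x_n|} \geq \varepsilon$, and in particular the ``crucial final step'' cannot be carried out: since $(x_n^+)$ lies in the almost limited set $\sol{(x_n)_n}$ (Lemma \ref{lemadom1}), \emph{every} disjoint weak* null sequence $(h_n) \subset E'$ satisfies $h_n(x_n^+) \to 0$, so the disjoint weak* null sequence with $h_n(x_n^+) \geq \varepsilon/4$ that you hope to produce cannot exist. (In the example, $T'y_n^*$ is a multiple of the constant function in $L_\infty[0,1]$ and there is nothing to disjointify.)

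The paper's proof never tries to control $T|x_n|$; it runs the actual Kalton--Saab approximation on $S$ itself. Setting $A = \sol{(x_n)_n}$, one first shows that $T$ sends disjoint sequences of $A$ to norm-null sequences (such sequences are almost limited and, by Theorem 3.34 of \cite{alip}, weakly null, so the alcc hypothesis applies), whence Theorem 4.36 of \cite{alip} yields $0 \leq u \in E_A$ with $\norma{T(|x_n|-u)^+}$ uniformly small. Theorem 4.87 of \cite{alip} --- this is exactly where $\sigma$-Dedekind completeness of $E$ and order continuity of the norm of $F$ enter --- produces positive operators $M_i, P_i$ with $0 \leq \sum_i P_i T M_i \leq T$ and $\norma{|S - \sum_i P_i T M_i|(u)}$ small; then $TM_i x_n \to 0$ because $M_i(A)$ is almost limited by Theorem \ref{lemadom2} and $M_i x_n \cvf 0$. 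Combining the three estimates bounds $\norma{Sx_n}$ by $\varepsilon$ without ever requiring $|x_n|$ to be weakly null. If you want to salvage your outline, the correct quantity to make small is not $T|x_n|$ but $T(|x_n|-u)^+$, coupled with the approximation of $S$ by $\sum_i P_i T M_i$.
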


\proof
Let $(x_n) \subset E$ be an almost limited weakly null sequence. We want to prove that $Sx_n \to 0$ in $F$.  
Let $A = \sol{(x_n)_n}$. We observe that for every disjoint sequence $(y_n) \subset A$, $\norma{Ty_n} \to 0.$ [Indeed, by Theorem 3.34 of \cite{alip}, $y_n \cvf 0$ in $E$. However, since $A$ is almost limited (Lemma \ref{lemadom1}), we get that $(y_n)$ is an almost limited weakly null sequence and the statement holds since $T$ is alcc.] Therefore, given $\epsilon > 0$, by Theorem 4.36 of \cite{alip}, we have that there exists $0 \leq u \in E_A$ such that
\begin{equation} \label{eqdom1}
    \norma{T((|x_n| - u^+))} < \epsilon/6 \quad \text{for all $n \in \N$}.
\end{equation}

Now, since $E$ is $\sigma$-Dedekind complete and $F$ has order continuous norm, by Theorem 4.87 of \cite{alip}, there exist positive operators $M_1, \dots, M_k: E \to E$ and $P_1, \dots, P_k: F \to F$ such that  $0 \leq \sum_{i=1}^k P_i \, T \, M_i \leq T$ on $E$ and 
\begin{equation} \label{eqdom2}
    \norma{|S - \sum_{i=1}^k P_i \, T \, M_i| \, (u)} < \epsilon/3.
\end{equation}

We claim  that $TM_i(x_n) \to 0,$ where  $n \to \infty$ for all $i = 1, \dots, k$.
Since $T$ is  an alcc operator, it suffices to prove that $(M_i(x_n))$ is an almost limited weakly null sequence in $E$.
As $E$ has property (d), $A \subset E$ is a solid almost limited set, $M_i: E \to E$ is a  positive operator and  
$M_i(x_n) \cvf 0$ in $E$. It follows from Theorem \ref{lemadom2} that $M_i(A)$ is an almost limited subset of $E$. 
In particular, $(M_i(x_n))_n$ is an almost limited weakly null sequence for each $i=1, \dots, k$ as we stated.  Consequently,
$$ \norma{\sum_{i=1}^k P_i T M_i (x_n)} \leq \sum_{i=1}^k \norma{P_i} \norma{TM_i(x_n)} \to 0. $$
Thus, there exists $n_0 \in \N$ such that
\begin{equation} \label{eqdom3}
    \norma{\sum_{i=1}^k P_i T M_i (x_n)} < \epsilon/3 \quad \text{for all $n \geq n_0$}.
\end{equation}

Finally, by using the same argument used in the proof of Kalton-Saab's theorem (Theorem 5.90 in \cite{alip}), we get that $\norma{Sx_n} < \epsilon$ for all $n \geq n_0$. Here, we give a short version of the proof. Since $0 \leq S \leq T$ and $0 \leq \sum_{i=1}^k P_i T M_i \leq T$, we have that
\begin{align*}
    |S(x_n) - \sum_{i=1}^k P_i T M_i(x_n)| 
        & \leq |S - \sum_{i=1}^k P_i T M_i| (|x_n| - u)^+ + |S - \sum_{i=1}^k P_i T M_i|(u) \\
        & \leq 2 T(|x_n| - u)^+ + |S - \sum_{i=1}^k P_i T M_i|(u).
\end{align*}
Now, from (\ref{eqdom1}) and (\ref{eqdom2}),
$$ \norma{S(x_n) - \sum_{i=1}^k P_i T M_i} \leq 2 \epsilon/3. $$
Finally, $\norma{Sx_n} < \epsilon$ for all $n \geq n_0$ holds from above inequality and (\ref{eqdom3}).
\qed

\medskip

Next, we will study when an alcc operator is completely continuous or compact. Since every compact operator is completely continuous, it follows that
$K(E;Y) \subset Cc(E; Y) \subset L^a cc(E;Y)$. If every alcc operator is a compact operator, we have that
$ K(E;Y) = Cc(E; Y) = L^a cc(E;Y). $
So $Cc(E; Y) = L^a cc(E;Y)$ must be a necessary condition for $K(E;Y) = L^a cc(E;Y)$.In the next proposition we given conditions so that these spaces are distinct.


\begin{prop} \label{alcc2}
Let $E$ be an infinite dimensional reflexive lattice with the sGP property. Then there exists a Banach space $Z$ such that $K(E;Z) \neq L^acc (E;Z)$. 
\end{prop}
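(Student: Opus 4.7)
The strategy is to exhibit a single bounded operator out of $E$ that is automatically alcc (by virtue of the sGP hypothesis on $E$) but fails to be compact (by virtue of infinite dimensionality and reflexivity). The most economical witness is simply the identity, so I would take $Z = E$ and $T = I_E$.

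The first step is to invoke the result (recalled in the paper, following Theorem 3.4 of \cite{ardakani}) that if $E$ has the sGP property, then $L^{a}cc(E;Y) = L(E;Y)$ for every Banach space $Y$. Applied with $Y = E$, this gives $I_E \in L^{a}cc(E;E)$. The second step is to observe that $I_E \notin K(E;E)$: indeed, if $I_E$ were compact, then $B_E$ would be relatively norm compact, forcing $E$ to be finite dimensional, which contradicts the hypothesis. Therefore $K(E;E) \subsetneq L^{a}cc(E;E)$, and the space $Z = E$ works.

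I expect no real obstacle, since both ingredients are already in place in the excerpt. The only minor subtlety is that reflexivity of $E$ is not strictly needed to conclude non-compactness of $I_E$, but it is part of the hypothesis (and in fact reflexivity plus the sGP property for an infinite dimensional Banach lattice is a genuinely restrictive combination, which is presumably why the author highlights it; any concrete example such as $\ell_p$ with $1 < p < \infty$ illustrates the situation).
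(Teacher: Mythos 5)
Your proof is correct, but it takes a genuinely different and more economical route than the paper. The paper first argues that $E$ cannot have the wDP* (hence not the DP*) property, invokes Theorem 2.8 of Salimi--Moshtaghioun to produce a Banach space $Z$ with $Cc(E;Z)\subsetneq Lcc(E;Z)$, uses the sGP property to identify $L^a cc(E;Y)=Lcc(E;Y)=L(E;Y)$, and uses reflexivity to identify $K(E;Z)=Cc(E;Z)$; the conclusion $K(E;Z)\neq L^a cc(E;Z)$ then falls out. You instead take $Z=E$ and $T=I_E$: the sGP property gives $I_E\in L^a cc(E;E)=L(E;E)$, and infinite dimensionality rules out compactness of $I_E$ by Riesz's lemma. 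Both ingredients are indeed available in the paper, so your argument is complete; it is shorter, makes the witness space explicit, and (as you note) does not actually use reflexivity, whereas the paper's argument leans on reflexivity twice (to exclude the DP* property and to get $Cc=K$). What the paper's longer route buys is the slightly sharper intermediate conclusion that the gap already occurs at the level of completely continuous operators, i.e.\ $Cc(E;Z)\neq L^a cc(E;Z)$ --- though your witness $I_E$ in fact also fails to be completely continuous on an infinite dimensional reflexive space, so nothing essential is lost.
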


\noindent \textit{Proof: }
By above commentary, $E$ cannot have the wDP* property and hence it neither can have the DP* property. By Theorem 2.8 of \cite{salimi}, there exists a Banach space $Z$ such that $Cc(E;Z) \subsetneq Lcc(E;Z)$.
On the  other hand, as $E$ has the sGP property, then
$L^a cc(E;Y) = Lcc (E;Y) = L(E;Y)$
for all Banach space $Y$. It follows that
$K(E;Z) = Cc(E;Z) \neq L^a cc(E;Z)$.
\qed

\smallskip

It is known that if $E$  has the wDP* property, then every alcc operator on $E$ is completely continuous (see page 22 in \cite{ardakani}). 
Using Proposition \ref{alcc0}, we will show that if $E$ has the property (d) and if every alcc operator on $E$ is completely continuous, then $E$ has the wDP* property.

\begin{prop} \label{alcc3}
Let  $E$ be  Banach lattice  with property (d). If  $Cc(E;F) = L^a cc(E;F)$ for all Banach lattices $F, $ then $E$ has the wDP* property.
 \end{prop}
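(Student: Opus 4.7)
The plan is to argue by contradiction: suppose $E$ does not have the wDP* property, and construct an alcc operator $T\colon E\to c_0$ that fails to be completely continuous, contradicting the hypothesis $Cc(E;c_0)=L^a cc(E;c_0)$.

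First, I would extract from the failure of wDP* a weakly null sequence together with a disjoint weak*-null sequence on which it refuses to vanish. By hypothesis there exist a relatively weakly compact set $A\subset E$, a disjoint weak*-null sequence $(f_n)\subset E'$, and $(x_n)\subset A$ with $|f_n(x_n)|\geq\epsilon$ for all $n$. Passing to a weakly convergent subsequence $x_{n_k}\cvf x$ (Eberlein--\v Smulian), and using $f_{n_k}(x)\to 0$, the sequence $y_k=x_{n_k}-x$ is weakly null and $g_k=f_{n_k}$ is a disjoint weak*-null sequence with $|g_k(y_k)|\geq\epsilon/2$ for all sufficiently large $k$.

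Next, I would define $T\colon E\to c_0$ by $T(x)=(g_n(x))_n$. It is well defined since $g_n\cvfe 0$, bounded by the uniform boundedness principle, and plainly not completely continuous, since $y_k\cvf 0$ while $\norma{Ty_k}_\infty\geq|g_k(y_k)|\geq\epsilon/2$. The crucial step is to show $T$ is order bounded. Here property (d) of $E$ enters decisively: it forces $|g_n|\cvfe 0$, so the formula $S(x)=(|g_n|(x))_n$ defines a positive linear operator $S\colon E\to c_0$ (boundedness follows again from uniform boundedness), and for $x\geq 0$ one has $(S-T)(x)_n=|g_n|(x)-g_n(x)=2g_n^-(x)\geq 0$. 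Thus $T=S-(S-T)$ is a difference of two positive operators; as $c_0$ is Dedekind complete, $T$ is regular and hence order bounded.

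Finally, because $c_0$ has order continuous norm (so it is $\sigma$-Dedekind complete and has property (d)) and enjoys the sGP property, Proposition \ref{alcc0} applies to $T\colon E\to c_0$ and yields that $T$ is alcc. Combined with the assumption $Cc(E;c_0)=L^a cc(E;c_0)$, this forces $T$ to be completely continuous, contradicting the previous paragraph. The main technical obstacle is producing the order bound on $T$, which is exactly where property (d) of $E$ is indispensable; without it one cannot pass from the disjoint weak*-null sequence $(g_n)$ to the dominating positive operator $S$, and Proposition \ref{alcc0} would not be available to close the argument.
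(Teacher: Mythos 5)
Your proof is correct, and while its overall architecture matches the paper's (argue by contradiction, build an operator $T: E \to c_0$ from a dual sequence witnessing the failure of wDP*, invoke Proposition \ref{alcc0} to conclude $T$ is alcc, and contradict $Cc(E;c_0) = L^a cc(E;c_0)$ via a weakly null sequence that $T$ does not send to zero), the middle of the argument is genuinely different. The paper short-circuits the construction by citing Theorem 3.1 of \cite{elbour2}, which characterizes the failure of wDP* directly in terms of a \emph{positive} weakly null sequence $(x_n) \subset E^+$ and a \emph{positive} weak*-null sequence $(f_n) \subset (E')^+$ with $f_n(x_n) \geq \epsilon$; the resulting operator $T(x) = (f_n(x))_n$ is then positive, hence order bounded with no further work. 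You instead start from the bare definition (a relatively weakly compact set that is not almost limited), recenter via Eberlein--\v{S}mulian to get a weakly null sequence $y_k$ and a disjoint weak*-null sequence $g_k$ with $|g_k(y_k)| \geq \epsilon/2$, and then use property (d) to dominate the disjoint operator $T(x) = (g_n(x))_n$ by the positive operator $S(x) = (|g_n|(x))_n$ --- which is exactly the regularization mechanism of the paper's Theorem \ref{propd8}. What your route buys is self-containedness: it needs no external characterization of wDP*, only the definition, Eberlein--\v{S}mulian, and machinery already in the paper. What the paper's route buys is brevity, since positivity of $T$ is immediate and no subsequence extraction is needed. All the individual steps you take check out: the subsequence of a disjoint sequence is disjoint, $|g_n|(x) - g_n(x) = 2g_n^-(x) \geq 0$ for $x \geq 0$ gives regularity, and $c_0$ is Dedekind complete with the sGP property, so Proposition \ref{alcc0} applies as you claim.
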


\begin{proof}
Assume that $E$ has property (d) and suppose by contradiction that $E$ does not have the wDP* property. By Theorem 3.1 of \cite{elbour2}, there exist $(x_n) \subset E^+$ a weakly null sequence and $(f_n) \subset (E')^+$ a weak* null sequence such that $f_n(x_n) \geq \epsilon$ for all $n$ and some $\epsilon > 0$. Consider the positive linear operator  $T: E \to c_0$ defined by $T(x) = (f_n(x))_n$ for all $x \in E$, by Proposition \ref{alcc0}  we have  that $T$ is an alcc operator. But $T$ is not completely continuous, once  $\norma{Tx_n} \geq  |f_n(x_n)| \geq \epsilon$ for all $n$. \qed
\end{proof}

\medskip

Corollary 5 of \cite{guti} shows that every completely continuous linear  operator on a Banach space $X$ is compact if and only if $X$ does not contain an isomorphic copy of $\ell_1$. In particular, if $E$ is a Banach lattice that does not contain copy of $\ell_1$ and if $E$ has the wDP* property, then every alcc operator on $E$ must be compact. 

\smallskip

Recall that a linear operator $T: E \to Y$ is called \textit{AM-compact} if it maps order intervals in $E$ to relatively compact sets in $Y$. In the next proposition, we study a relation between the class of alcc operators and the class of AM-compact operators.

\begin{prop}
Let $E$ and $F$ be two Banach lattices. If $E$ has property (d), then every alcc operator $T: E \to F$ is AM-compact.
\end{prop}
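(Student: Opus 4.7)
The plan is to prove that $T([-x,x])$ is relatively compact in $F$ for every $x \in E^+$, which is precisely the definition of AM-compactness. Since $T([-x,x]) \subseteq T([0,x]) - T([0,x])$ and the difference of two relatively compact sets in a Banach space is relatively compact, it suffices to show that $T([0,x])$ is relatively compact.

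First, by Proposition~\ref{prop22}, property (d) ensures that the order interval $[0,x]$ is almost limited in $E$. Consequently, any disjoint sequence $(u_n) \subseteq [0,x]$ is both almost limited (as a subset of the almost limited set $[0,x]$) and weakly null by Theorem~3.34 of \cite{alip}, which asserts that every order-bounded disjoint sequence in a Banach lattice is weakly null. Since $T$ is alcc, this immediately gives $\norma{Tu_n} \to 0$ for every disjoint sequence in $[0,x]$.

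Next, I would invoke Theorem~4.36 of \cite{alip}, in the same spirit as in the proof of Theorem~\ref{teodom2}: for every $\epsilon > 0$ there exists $u$ in the positive cone of the ideal generated by $[0,x]$ such that $\norma{T((y-u)^+)} < \epsilon$ for all $y \in [0,x]$. The decomposition $y = y \wedge u + (y-u)^+$ then yields the inclusion $T([0,x]) \subseteq T([0,u]) + \epsilon B_F$, reducing the task to the relative compactness of $T([0,u])$.

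The main obstacle lies in this final step: converting the above $\epsilon$-approximation into genuine total boundedness of $T([0,x])$. This is nontrivial because the standard Dodds--Fremlin-style characterizations of AM-compactness, which deduce compactness from the behavior on disjoint order-bounded sequences, typically require either that $T$ be order bounded or that $F$ have order continuous norm, neither of which is assumed here. I expect to surmount this either by iterating Theorem~4.36 to successively refine the approximation, or by passing to the principal ideal $E_u$ --- which, endowed with its order-unit norm, is an AM-space isometric to some $C(K)$ --- and exploiting the fact that the restriction of $T$ becomes a Dunford-Pettis operator on $C(K)$ to adapt a classical compactness criterion to the alcc setting.
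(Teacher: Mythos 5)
There is a genuine gap, and you point to it yourself: the final step, passing from the behaviour of $T$ on disjoint sequences of $[0,x]$ (plus the $\epsilon$-approximation from Theorem 4.36 of \cite{alip}) to actual total boundedness of $T([0,x])$, is exactly where the work lies, and your proposal does not carry it out. Neither of your suggested escape routes is likely to close it. Iterating Theorem 4.36 only replaces $T([0,x])$ by $T([0,u])$ up to an $\epsilon$-perturbation, and $[0,u]$ is again just an order interval with no better properties, so the iteration does not converge to anything compact. Passing to the principal ideal $E_u \cong C(K)$ would require knowing that $T|_{E_u}$ is Dunford--Pettis (or weakly compact) on $C(K)$ to conclude compactness on the unit ball $[-u,u]$, and nothing in the hypotheses gives you that: an alcc operator need not be completely continuous, let alone weakly compact. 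The Dodds--Fremlin-type machinery you mention genuinely does need order continuity of the norm of $F$ or order boundedness of $T$, and you correctly observe that neither is assumed.

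The missing ingredient is the characterization of alcc operators in terms of sets rather than sequences: Theorem 3.2 in \cite{ardakani} states that an alcc operator carries almost limited sets onto relatively compact sets. With that in hand the paper's proof is two lines: by Proposition \ref{prop22}, property (d) of $E$ makes every order interval almost limited, and then $T(A)$ is relatively compact for every order interval $A$ by the cited theorem. Your opening step (order intervals are almost limited via Proposition \ref{prop22}) is the right one, and your observation that disjoint order-bounded sequences are weakly null (so that $\norma{Tu_n}\to 0$ for disjoint $(u_n)\subset[0,x]$) is correct but is only a necessary condition for compactness of $T([0,x])$, not a sufficient one. You should either quote Theorem 3.2 of \cite{ardakani} directly or reprove it; as written, the proof is incomplete.
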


\begin{proof}
Let $T: E \to F$ be an alcc operator and let $A \subset E$ be an order interval. It follows from Proposition \ref{prop22} that $A$ is an almost limited set, and so $T(A)$ is a relatively compact subset of $F$ (see Theorem 3.2 in \cite{ardakani}). Thus $T$ is an AM-compact operator.\qed
\end{proof}



    

\medskip

\medskip

\noindent \textbf{Acknowledgments: } The authors are thankful to the referee for the careful reading and considered suggestions leading to a better paper.


\begin{thebibliography}{}


\bibitem{alip} C. Aliprantis, O. Burkinshaw, \emph{Positive Operators}, Springer, Dordrecht, (2006).

\bibitem{ardakani} H. Ardakani, S. M. S. M. Mosadegh, M. Moshtaghioun, M. Salimi, \emph{The strong Gelfand-Phillips property in Banach lattices}, Banach J. Math. Anal. 10(1) (2016) 15-26.



\bibitem {bouras} K. Bouras, \emph{ Almost Dunford-Pettis sets in Banach lattices,}  Rend. Circ. Mat. Palermo. 62 (2013) 227-236.

\bibitem{bourgdiest} J. Bourgain, J. Diestel, \emph{Limited operators and strict cosingularity}, Math.  Nachr. 119 (1984) 55-58.

\bibitem{cargalou} H. Carrión, P. Galindo, M. L. Lourenço, \emph{A stronger Dunford-Pettis property}, Studia Math. 184(3) (2008) 205-2016.



\bibitem{chen} J. X. Chen, Z. L. Chen, G. X. Ji, \emph{Almost limited sets in Banach lattices}, J. Math. Anal. Appl. 412 (2014) 547-553.



\bibitem{drew} L. Drewnowski, \emph{On Banach spaces with the Gelfand-Phillips Property}, Math. Z. 193 (1986) 405-411.



\bibitem{elbour} A. Elbour {\em Some characterizations of almost limited operators}, Positivity 21 (2017) 865-874.

\bibitem{habala} M. Fabian, P. Habala, P. Haj\'ek, V. Montesinos, V. Zizler, \emph{Banach Space Theory. The Basis for Linear and Nonlinear Analysis,} Springer (2011).

\bibitem{guti} J. M. Gutiérrez, \emph{Weakly continuous functions on Banach spaces not containing  $\ell_1$}, Proc. Amer. Math. Soc. 119(1) (1993) 147-152.

\bibitem{micha} J. H'michane, N. Hafidi, L. Zraoula, \emph{On the class of disjoint limited completely continuous operators,} Positivity 22 (2018) 1419-1431.

\bibitem{elbour2} N. Machrafi, A. Elbour,  M. Moussa, {\em Some characterizations of almost limited sets and applications},  arXiv:1312.2770v2 (2014). 

\bibitem{loumir} M. L. Lourenço, V. C. C. Miranda, {\em A note on the Banach lattice $c_0(\ell_n^2)$ and its dual },  arXiv:2011.09319v2 (2022).

\bibitem{meyer} P. Meyer-Nieberg, \emph{Banach Lattices}, Springer-Verlag (1991).

\bibitem{salimi} M. Salimi, M. Moshtaghioun, \emph{The Gelfand-Phillips property in closed subspaces of some operator spaces}, Banach J. Math. Anal. 5(2) (2011) 84-92.

\bibitem{salimi2} M. Salimi, M. Moshtaghioun, \emph{A new class of Banach spaces and its relation with some geometric properties of Banach spaces}, Abstr. Appl. Anal. (2012), 1085-1092.

\bibitem{stegall} C. Stegall, Duals of Certain Spaces With the Dunford-Pettis Property, Notices Amer. Math. Soc.
19 (1972) 799.



\bibitem{wnuksurv}
W. Wnuk, \emph{Banach lattices with properties of the Schur type: A Survey}, Conf. Sem. Mat.
Univ. Bari 249 (1993) 1-25. 


\bibitem{wnukbook} W. Wnuk, \emph{Banach Lattices with Order Continuous Norms}, Polish Scientific Publishers PWN, Warsaw, (1999).


\bibitem{wnuk} W. Wnuk, {\em On the dual positive Schur property in Banach lattices}, Positivity 17 (2013) 759-773. 


\end{thebibliography}
\end{document}